
\documentclass[english]{amsart}
\usepackage[T1]{fontenc}
\usepackage[latin9]{inputenc}
\usepackage{amsthm}
\usepackage{amstext}
\usepackage{graphicx}
\usepackage{amssymb}
\usepackage{esint}
\makeatletter
\numberwithin{equation}{section} 
\numberwithin{figure}{section} 
\theoremstyle{plain}
\newtheorem{theorem}{Theorem}
  \theoremstyle{plain}
  \newtheorem{proposition}[theorem]{Proposition}
  \theoremstyle{definition}
  \newtheorem{definition}[theorem]{Definition}
  \theoremstyle{plain}
  \newtheorem{lemma}[theorem]{Lemma}
  \theoremstyle{plain}
  
  \theoremstyle{remark}
  \newtheorem*{rem*}{Remark}

\usepackage{subfigure}\usepackage{epsfig}\usepackage{eepic}\usepackage{latexsym}
 \usepackage{amsmath}

\def \ve{\varepsilon}
\def \O{\Omega}

\def\XXint#1#2#3{{\setbox0=\hbox{$#1{#2#3}{\int}$}
\vcenter{\hbox{$#2#3$}}\kern-.87\wd0}}

\def\XXiint#1#2#3{{\setbox0=\hbox{$#1{#2#3}{\int}$}
\vcenter{\hbox{$#2#3$}}\kern-1.05\wd0}}

\def\XXintt#1#2#3{{\setbox0=\hbox{$#1{#2#3}{\int}$}
\vcenter{\hbox{$#2#3$}}\kern-.72\wd0}}

\def\Xinttt#1{\mathchoice
{\XXinttt\displaystyle\textstyle{#1}}%
{\XXinttt\textstyle\scriptstyle{#1}}%
{\XXinttt\scriptstyle\scriptscriptstyle{#1}}%
{\XXinttt\scriptscriptstyle\scriptscriptstyle{#1}}%
\!\int}
\def\XXinttt#1#2#3{{\setbox0=\hbox{$#1{#2#3}{\int}$}
\vcenter{\hbox{$#2#3$}}\kern-.52\wd0}}

\def\XXintttr#1#2#3{{\setbox0=\hbox{$#1{#2#3}{\int}$}
\vcenter{\hbox{$#2#3$}}\kern-.6\wd0}}

\def\XXintttt#1#2#3{{\setbox0=\hbox{$#1{#2#3}{\int}$}
\vcenter{\hbox{$#2#3$}}\kern-.78\wd0}}

\def\sqr#1#2{{\vcenter{\vbox{\hrule height.#2pt\hbox{\vrule width.#2pt height#1pt \kern#1pt\vrule width.#2pt}\hrule height.#2pt}}}}
\def\qed{\hfill$\sqr45$\bigskip}

\def\ddashinttt{\Xinttt-}

\usepackage{a4wide}

\makeatother

\usepackage{babel}

\title[Locally-periodic two-scale convergence]{Two-scale convergence for locally-periodic microstructures and homogenization of  plywood structures.}

\author{Mariya Ptashnyk}

\begin{document}

\maketitle

\begin{abstract}
The introduced  notion of locally-periodic  two-scale convergence  allows to average a wider range of microstructures, compared to the periodic one.  The compactness theorem for the locally-periodic  two-scale convergence  and the characterisation of  the limit for  a sequence bounded in $H^1(\Omega)$ are proven.  The underlying analysis  comprises the  approximation of   functions,  which periodicity  with respect to the fast variable depends on the slow  variable, by locally-periodic functions, periodic in subdomains smaller than  the considered domain, but larger than the size of  microscopic structures. The  developed theory  is applied to derive  macroscopic equations  for a  linear elasticity problem defined in  domains with plywood structures. 
\end{abstract}

%
%
\pagestyle{myheadings}
\thispagestyle{plain}

\section{Introduction}
Many natural and man made  composite materials comprise  non-periodic microscopic structures, for example   fibrous microstructure with varied orientation of fibres in  heart muscles, \cite{Peskin},  in exoskeletons, \cite{Raabe2009_2},  in  polymer membranes and   industrial filters, \cite{Schweers}, or space-dependent perforations in concrete, \cite{Roy}. An interesting and important  special case of  non-periodic microstructures is the so called  locally-periodic microstructures,  where   spatial changes of the microstructure are observed on a  scale smaller than the size of the considered domain but  larger  than the characteristic  size of the microstructure.  The  distribution of  microstructures  in   locally-periodic materials  is known a priori,  in contrast to the stochastic description of the medium considered in stochastic homogenization, \cite{Andro}.  
 
There are  few mathematical results on  homogenization in locally-periodic and  fibrous media.  The homogenization of a heat-conductivity problem defined in locally-periodic and  non-periodic  domains consisting of spherical  balls   was studied in \cite{Briane3} using the Murat-Tartar $H-$convergence method, defined in \cite{Murat}.  The locally-periodic and non-periodic distribution of  balls is given by a $C^2-$diffeomorphism  $\theta$, transforming the centres of the balls. For the derivation of macroscopic equations for  the  problem posed in the non-periodic domain, where the changes of the microstructure  are given on the scale of the considered microstructure,  a locally-periodic  approximation was considered.  Estimates for the numerical approximation of this problem were derived in \cite{Shkoller}.  
The notion of $\theta-2$ convergence, motivated by  the homogenization in a domain  with a microstructure of non-periodically  distributed  spherical balls, was introduced in \cite{Alexandre}.  The Young measure was used in  \cite{Mascarenhas} to extend  the concept of periodic two-scale convergence, presented in \cite{Allaire}, and to define the so-called {\it scale convergence}. The definition of  scale convergence was mainly  motivated by the derivation of  the $\Gamma$-limit for a sequence of nonlinear energy functionals involving non-periodic oscillations. It has been shown that the two-scale and  multi-scale convergences are particular cases of the scale convergence.  In  \cite{Mascarenhas}, as an example of non-periodic oscillations, the domain with  perforations given by  the transformation of centre of balls was considered. 
Macroscopic models for  non-periodic fibrous materials were presented in \cite{Briane2} and derived in \cite{Briane1}.    The non-periodic fibrous material is characterised by gradually rotated  planes of parallel aligned fibres.  By applying the $H-$convergence method for a locally periodic approximation  of the non-periodic microstructure the effective homogenized matrix was derived.
The asymptotic expansion method was used  in \cite{Mikelic} to derive macroscopic equations for a filtration problem
 through a locally-periodic fibrous medium.   A  formal asymptotic expansion was also applied to derive a macroscopic model  for a Poisson equation, \cite{Chechkin1, Chechkin}, and  for convection-diffusion equations,  \cite{AdrianTycho1, AdrianTycho2}, defined  in  domains with  locally-periodic perforations, i.e. domains consisting of  periodic cells with  smoothly changing perforations. 
Two-scale convergence, defined for periodic test functions, was applied in \cite{Mascarenhas1, Mascarenhas3} to homogenize  warping,
 torsion and Neumann problems  in a two-dimensional domain with a smoothly changing perforation.  
Optimization of the corresponding homogenized problems was considered in \cite{Mascarenhas2, Polisevski}.
Locally-periodic perforation is related to the locally-periodic microstructure, considered in this work, so that  the  changes in the perforation are given on the $\ve$-level  and can be approximated by the locally-periodic microstructure, which is periodic in each subdomain of size $\sim\ve^{dr}$,  where $0<r<1$,  $\ve$ is the size of the periodic cell,  and $d$ is the dimension of considered domain.

 Two-scale convergence is a special type of the  convergence in $L^p$-spaces. It was introduced by Nguetseng, \cite{Nguetseng},  further developed in  \cite{Allaire, Lukkassen} and  is widely used for the homogenization of partial differential equations with  periodically oscillating coefficients or  problems posed in   media with periodic microstructures or with locally-periodic perforations (named also as quasi-periodic perforations).  Admissible test functions  used in the definition of two-scale convergence, are functions dependent on two variables: the fast microscopic and slow macroscopic variable, and periodic with respect to the fast variable. The two-scale convergence conserves the information regarding oscillations of the considered function sequence and overcomes difficulties resulting from  weak convergence of fast oscillating periodic functions.

In this article we generalise the notion  of the  two-scale convergence to locally-periodic situations, see Definition~\ref{def_two-scale}.  The considered test functions are locally-periodic  approximations of the corresponding functions with  the space-dependent periodicity  with respect to the fast variable  being  dependent on the slow variable. This generalised notion of the two-scale convergence provides easier and  more general  techniques for homogenization of partial differential equations with locally-periodic coefficients or considered in domains with  locally-periodic microstructures. The central result of the work is the compactness Theorem~\ref{L2_lp_covregence} for the locally periodic two-scale convergence and the characterisation of  the locally-periodic two-scale limit for  a sequence bounded in $H^1(\Omega)$,  Theorem~\ref{compactness_theorem}.
The proofs indicates  that  local periodicity of the considered microstructure is essential for the convergence of spatial derivatives.  Due to the definition of a locally-periodic domain  we have that the size of the subdomains with periodic microstructure is of order $\ve^{dr}$, where $0<r<1$ and $\ve$ is the size of the microstructure. Thus, the gradient of the smooth approximations of characteristic functions of  subdomains with periodic microstructures multiplied by $\ve$ is of order $\ve^{1-\rho}$, with $r<\rho<1$, and converges to zero as $\ve \to 0$.  This fact allows us to approximate the locally-periodic test function by differentiable functions and show the convergence of  spatial derivatives. 
 
In  Section~\ref{Application} we apply the locally-periodic two-scale convergence to derive  macroscopic mechanical properties of  biocomposites, comprising  non-periodic microstructures. As  an example of such a microstructure  we consider  the  plywood structure of the exoskeleton of a lobster, \cite{Raabe2009_2}.  Mechanical properties of the biomaterial are modelled by equations of linear elasticity, where the microscopic geometry and elastic properties of different components are reflected in the stiffness matrix of  the microscopic   equations. The fully non-periodic microstructure is approximated by a locally-periodic  domain, provided the transformation matrix, describing the microstructure, is twice continuously differentiable.  Our calculations for the fully non-periodic  situation were inspired by \cite{Briane2}.   Note that the techniques developed in this article can be applied  to derive macroscopic equations for a wide class of partial differential equations,  and are not restricted to the problem of linear elasticity. The introduced convergence is  also applicable to more general non-periodic transformations  as transformation of centres of spherical balls.
   
\section{Description of plywood structure}\label{ModelD} 
A major  challenge in   material science is the design of stable but light materials. Many biomaterials  feature excellent mechanical properties, such as   strength  or stiffness, regarding  their low density. For example, the  strength of bone is similar to that of steel, but it is three times lighter and ten times more flexible, \cite{Gao2004}.  Recent research suggests that this phenomenon is primarily   a consequence of the hierarchical structure of biomaterials over several length scales, \cite{Raabe2009}.  A better understanding of  the influence of microstructure on mechanical properties of  biomaterials is not only a theoretical challenge by itself but may also help to improve the design and production of synthetic materials
  
Here we consider  the exoskeleton of  a lobster as an example of such biomaterials. The exoskeleton is a hierarchical composite consisting of chitin-protein fibres, various proteins, mineral nanoparticles and water.  A prototypical pattern  found in the exoskeleton is the so-called  twisted plywood structure, 
given as the superposition  of planes of parallel aligned chitin-protein fibres, gradually rotated with  rotation angle $\gamma$,  \cite{Raabe2009_2}.

We would like to study  elastic properties of a exoskeleton. In the  formulation of the microscopic model defined on the scale of a single fibre,  we shall distinguish between mechanical properties of fibres and  inter-fibrous space. We assume that the fibres are cylinders of radius $\ve a$, perpendicular to the $x_3$-axis, whereas   $0<a <1/2$  and  $\ve>0$. It has been observed that   different parts of the exoskeleton  comprise different  rotation densities, \cite{Raabe2009_2}. 
Thus,  we shall distinguish between the locally-periodic  plywood structure, where the hight of layers of fibres aligned in the same direction is  of order  $\ve^r$ with $0<r<1$, and  the non-periodic microstructure, where each layer of fibres is rotated by a different angle, i.e. $r=1$, see Fig.~\ref{fig12} and Fig.~\ref{fig_non_per_plywood} in Section~\ref{Application}.  In the derivation of effective macroscopic equations the non-periodic situation shall be approximated by the locally-periodic microstructure. The case  $r=0$ would imply the periodic microstructure  and will not be considered here.

\begin{figure}
\begin{center}
\includegraphics[width=0.85\textwidth]{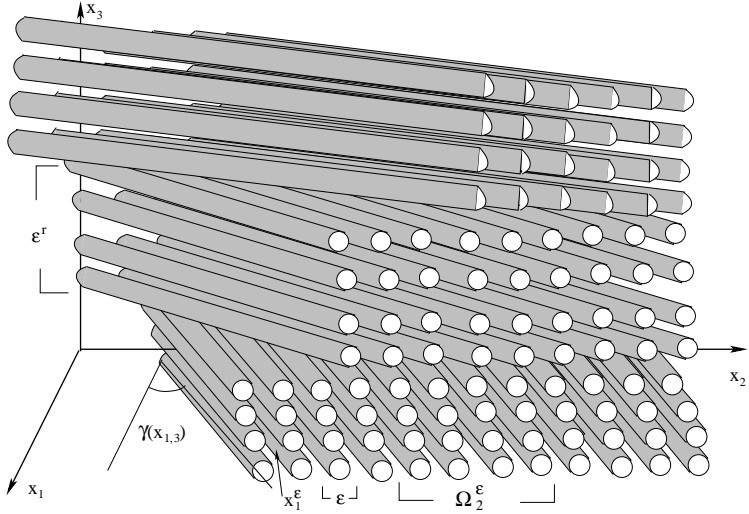}
\caption{\small{ Locally-periodic plywood structure. }} \label{fig12}
\end{center}
\end{figure}

In order to define the characteristic function of the domain occupied by fibres for locally-periodic plywood structure we divide  $\mathbb R^3$ into perpendicular to the $x_3$-axis layers $L_k^\ve=\mathbb R^2\times((k-1)\ve^r, k\ve^r)$  of  height  $\ve^r$, where $k \in \mathbb Z$ and  $0<r<1$, and in each $L_k^\ve$ choose  an arbitrary  point  $x_k^\ve\in L_k^\ve$.  In   $\mathbb R\times \hat Y$, with $\hat Y=[0,1]^2$,  we consider  the characteristic function  $\tilde\eta$  of a  cylinder of radius $a$ 
\begin{equation}\label{char}
\tilde \eta(y)= \begin{cases}
          1, \quad  |\hat y-1/2| \leq a ,\\
0, \quad |\hat y-1/2| > a,
         \end{cases}
\end{equation}
where $\hat y = (y_2, y_3)$,  and  extended  $\hat Y$-periodically to the whole of $\mathbb R^3$. 

For a Lipschitz bounded domain $\Omega$ we define $\Omega_f^\ve\subset\Omega$ as  the subdomain occupied by fibres. Then the characteristic function of $\Omega^\ve_f$ is given by 
\begin{equation}\label{charac_local_per}
\chi_{\Omega_f^\ve}(x)= \chi_\Omega(x)\sum\limits_{k\in \mathbb Z } \eta_\ve (R_{x_k^\ve} \, x) 
\chi_{L_k^\ve}(x),
\end{equation}
 with  $\eta_\varepsilon(x)= \eta(x/\ve)$ and $R_{x_k^\ve}= R(\gamma(x_{k,3}^\ve))$,   where $\gamma \in C^2(\mathbb R)$  is a given function,   $0\leq \gamma(x) \leq \pi$ for  $x\in \mathbb R$,  and  $R(\alpha)$  is the inverse of  the rotation  matrix   around the  $x_3$-axis  with rotation angle $\alpha$ with the $x_1$-axis 
\[
R(\alpha)\hspace{-0.05 cm}=\hspace{-0.1 cm} \left(\hspace{-0.1 cm}
\begin{array}{ccc}
 \phantom{-}\cos(\alpha) &\sin(\alpha)& 0\\
-\sin(\alpha) &\cos(\alpha)& 0\\
 0 & 0 & 1
\end{array}
\right).
\]
 The assumed regularity of $\gamma$ will be essential for the  homogenization analysis in Section~\ref{Application}. By $\chi_A$   we denote the characteristic function of a domain $A$.
 
In the context of  the  theory of linear elasticity,  which is widely used  in the continuous mechanics of solid materials, \cite{Lemaitre},  we describe  the elastic properties of a material with  the plywood structure    
\begin{equation}\label{main}
\begin{cases}
-\text{div}  \left(E^\ve(x)\,e(u^\varepsilon)\right)=G  \quad & \text{ in } \Omega, \\
 u^\varepsilon= g \qquad & \text{ on } \partial\Omega,
\end{cases}
\end{equation} 
where
$  e _{ij}(u^\varepsilon)= \frac 12\left(\frac{\partial u^\varepsilon_i}{\partial x_j}
 + \frac{\partial u^\varepsilon_j}{\partial x_i}\right)
$ and  the elasticity tensor   $E^\ve$ is given  by 
\begin{equation}\label{stif_loc_period}
E^\ve(x)= E_1\chi_{\Omega_f^\ve}(x) +  E_2\big(1-\chi_{\Omega_f^\ve}(x)).
\end{equation}
Here  $E_1$ is the stiffness tensor of the chitin-protein fibres and  $E_2$ is the stiffness tensor of the inter-fibre space. The boundary condition and the right hand side  describe the external forces applied to the material.

The considered microstructure is locally-periodic, i.e. it is periodic in each layer $L_k^\ve$, where  the height of the layer  $\ve^r$  is larger as the radius of a single fibre, but  is small compared to the size of the considered domain $\Omega$, provide  $\ve$ is sufficiently small. 

To allow more general  locally-periodic oscillations and microstructures, we shall consider the partition of $\Omega$ into cubes of side $\ve^r$ in the definition of the locally-periodic two-scale convergence. In the    locally-periodic plywood-structure   the rotation angle is constant in each layer $L_k^\ve$ and the characteristic function of  $\Omega^\ve_f$ can  also be defined considering the additional  division of  $L_k^\ve$ into  cubes  of  side $\ve^r$. Thus,   the introduced below  locally-periodic two-scale convergence is applicable to fibrous media. Although  the division of $\Omega$ into layers $L_k^\ve$ is  sufficient to define the microstructure of  the locally periodic plywood-structure, for the homogenization of  a non-periodic plywood-structure the partition covering of $\Omega$ by cubes  will be essential, see Section~\ref{Application}.
 
\section{Two-scale convergence for locally-periodic microstructures}\label{TwoScaleConv}
First we introduce the space-dependent periodicity and the corresponding function spaces.
Let $\Omega\subset \mathbb R^d$ be a bounded Lipschitz domain. 
For each $x \in \mathbb R^d$ we  consider a transformation matrix   $D(x)\in \mathbb R^{d\times d}$ and its inverse   $D^{-1}(x)$,  such that 
$D, D^{-1} \in \text{Lip}(\mathbb R^d; \mathbb R^{d\times d})$ and  $0<D_1\leq |\det D(x)| \leq D_2<\infty$ for all $x\in \overline \Omega$.
 For convenience,  we shall use the notations $D_x:=D(x)$ and $D_x^{-1}:=D^{-1}(x)$. By   $Y=[0,1]^d$ we denote the so-called 'unit cell' and consider  the  continuous family of  parallelepipeds   $Y_x=D_xY$ on $\overline\Omega$. 

We shall consider the space   $C(\overline\Omega; C_{\text{per}}(Y_x))$ given in a standard way, i.e. for any $\tilde \psi \in C(\overline\Omega; C_{\text{per}}(Y))$ the relation   $\psi(x,y)= \tilde \psi(x, D_x^{-1}y)$  with $x\in \Omega$ and $y \in Y_x$ yields   $\psi \in C(\overline\Omega; C_{\text{per}}(Y_x))$. In the same way  the spaces $L^p(\Omega; C_{\text{per}}(Y_x))$, $L^p(\Omega; L^q_{\text{per}}(Y_x))$  and  $C(\overline\Omega; L^q_{\text{per}}(Y_x))$,  for $1\leq p\leq\infty$, $1\leq q <\infty$,  are given.
 
The separability of  $C_{\text{per}}(Y_x)$  for each $x \in \Omega$ and the Weierstrass approximation for continuous functions  
$u \in (\Omega \to C_{\text{per}}(Y_x))$ imply the separability of $C(\overline \Omega; C_{\text{per}}(Y_x))$. 
 For  the norm  $ \| \psi\|_{C(\overline \Omega; C_{\text{per}}(Y_x))}:=
\sup\limits_{x\in \overline \Omega} \sup\limits_{y\in  Y_x} |\psi(x,y)| $
we have   the  relations
\[
\| \psi\|_{C(\overline \Omega; C_{\text{per}}(Y_x))}= 
\sup\limits_{x\in \overline\Omega} \sup\limits_{y \in  Y_x} |\tilde \psi(x,D_x^{-1}y)|=
\sup\limits_{x\in \overline\Omega} \sup\limits_{\tilde y\in  Y}|\tilde \psi(x, \tilde y)|.
\]
For $p<\infty$, the separability of $C_{\text{per}}(Y_x)$ for   $x \in \Omega$  
and the approximation of $L^p$- functions $u \in (\Omega \to C_{\text{per}}(Y_x))$ by simple functions imply  the separability of 
$L^p(\Omega; C_{\text{per}}(Y_x))$. The norm is defined in the standard way
\[
\|\psi\|_{L^p(\Omega; C_{\text{per}}(Y_x))}:=  \int_\Omega \sup\limits_{y\in  Y_x} |\psi(x,y)|^p \, dx.
\]
 The space  $L^2(\Omega; L^2(Y_x))$ is a Hilbert space with a scalar product given by 
\begin{eqnarray*}
 \int_\Omega \int_{Y_x} \psi(x,y) \phi (x,y) \, dy dx = 
\int_\Omega \int_{Y} \tilde \psi(x,\tilde y) \tilde\phi (x,\tilde y) |\det D_x| d\tilde y dx
\end{eqnarray*}
for $\psi, \phi \in L^2(\Omega; L^2(Y_x))$ with  $\psi(x,y)= \tilde \psi(x, D_x^{-1} y)$, $\phi(x,y)= \tilde \phi(x, D_x^{-1} y)$ for a.a $x\in \Omega$ and $y \in Y_x$,
and $\tilde \psi, \tilde \phi \in L^2(\Omega; L^2(Y))$. 

Due to  the assumptions on $D$, i.e.  $D\in \text{Lip}(\overline\Omega)$ and  $|\det D(x)|$ is uniformly bounded from below and above in $\overline\Omega$,  we obtain that  
\begin{eqnarray*}
 L^2(\Omega; H^1(Y_x)) &=&\{ u \in L^2(\Omega; L^2(Y_x)), \, \nabla_y u \in  L^2(\Omega; L^2(Y_x)^d) \} \quad \qquad \text{ and }  \\
  L^2(\Omega; L^2(\partial Y_x))&=& \Big\{u:\cup_{x\in \Omega} \big(\{x\}\times \partial Y_x\big) \to \mathbb R 
\text{ measurable},  \int_\Omega \|u\|^2_{L^2(\partial Y_x)} dx< \infty\Big\}
\end{eqnarray*}
are well-defined, separable Hilbert spaces,  \cite{Dixmier, Meier, Showalter}. 

To introduce the notion of  locally-periodic two-scale convergence for a sequence $\{u^\ve\}_{\ve>0}$ in $L^2(\Omega)$ we consider the covering of $\Omega$ by cubes.  For $\ve >0$, similarly as in \cite{Briane3}, we consider the partition covering of  $\Omega$ by a family of  open non-intersecting cubes $\{\Omega_n^\ve\}_{1\leq n\leq  N_\ve}$ of side $\varepsilon^r$, with $0<r<1$, such that 
 \begin{equation*}
  \Omega\subset \cup_{n=1}^{N_\ve} \overline\Omega_n^\ve \quad \text{and } \quad 
 \Omega_n^\ve \cap \Omega \neq \emptyset ,
 \end{equation*} 
 where $N_\ve$  is the number of  $\Omega_n^\ve$ having a non-empty intersection with $\Omega$. We consider 
\[
\mathcal K^\ve= \Omega \setminus  \big(\cup_{n=1}^{\tilde N_\ve} \overline\Omega_n^\ve\big),
\]
where  by $\tilde N_\ve$  we denote  the number of all cubes $\Omega_n^\ve$ enclosed in $\Omega$. Then 
\[
\mathcal K^\ve\subset \cup_{n=\tilde N_\ve +1}^{N_\ve}\overline \Omega_n^\ve .
\]
All $\Omega_n^\ve$ with $n=\tilde N_\ve+1, \ldots, N_\ve$ have the non-empty intersection with $\partial \Omega$ and are enclosed in a $\ve^r$-neighbourhood of $\partial \Omega$. Then  the size of the domain $\mathcal K^\ve$ can be estimated 
\[
| \mathcal K^\ve| \leq  C \ve^r,
\]
with some constant $C$. This gives $\tilde N_\ve \ve^{rd} \leq |\Omega|$ and $N_\ve \ve^{rd} \leq |\Omega| + C
$  for $\ve \leq 1$. Thus
\[
N_\ve\leq C \ve^{-rd} \quad \text{ and } \quad  \tilde N_\ve \leq C \ve^{-rd}. 
\]
In the following  we shall denote   by $x_n^\ve, \tilde x_n^\ve \in \Omega_n^\ve\cap \Omega$, for $n=1,\ldots, N_\ve$, arbitrary chosen fixed points. 

We consider $\psi \in C(\overline\Omega; C_{\text{per}}(Y_x))$ and  corresponding function $\tilde \psi \in C(\overline\Omega; C_{\text{per}}(Y))$.
As a locally-periodic approximation  of $\psi$ 
we name   $\mathcal L^\ve: C(\overline\Omega; C_{\text{per}}(Y_x))\to L^\infty(\Omega)$ given by 
\[
(\mathcal L^\ve \psi)(x)=    \sum\limits_{n=1}^{N_\ve} \tilde \psi\Big(x, \frac {D^{-1}_{x_n^\ve}(x-\tilde x_n^\ve) }\ve\Big)\chi_{\Omega_n^\ve}(x)  
\quad \text{ for } x\in \Omega.
\]
We   consider also  the map $\mathcal L^\ve_0: C(\overline\Omega; C_{\text{per}}(Y_x)) \to L^\infty(\Omega)$ defined  for $x\in \Omega$ as
\[
(\mathcal L^\ve_0 \psi)(x)=\sum\limits_{n=1}^{N_\ve}  \psi\Big(x_n^\ve, \frac {x-\tilde x_n^\ve}\ve\Big)\chi_{\Omega_n^\ve}(x)=
 \sum\limits_{n=1}^{N_\ve} \tilde \psi\Big(x_n^\ve, \frac {D^{-1}_{x_n^\ve}(x-\tilde x_n^\ve)}\ve\Big)\chi_{\Omega_n^\ve}(x).
 \]
 If we choose $\tilde x_n^\ve= D_{x_n^\ve} \ve k$ for some $k \in \mathbb Z^d$, then  the periodicity of $\tilde \psi$ implies
\[
(\mathcal L^\ve \psi)(x)=   
 \sum\limits_{n=1}^{N_\ve} \tilde \psi\Big(x, \frac {D^{-1}_{x_n^\ve}x }\ve\Big)\chi_{\Omega_n^\ve}(x) \quad \text{ and } \quad 
  (\mathcal L^\ve_0 \psi)(x)=   
 \sum\limits_{n=1}^{N_\ve} \tilde \psi\Big(x_n^\ve, \frac {D^{-1}_{x_n^\ve}x }\ve\Big)\chi_{\Omega_n^\ve}(x)
\]
 for  $x\in \Omega$. In following we shall consider the case  $\tilde x_n^\ve=  D_{x_n^\ve} \ve k$, with $k\in \mathbb Z^d$,
 however all results hold for arbitrary chosen   $\tilde x_n^\ve \in \Omega_n^\ve$ with $n=1,\ldots, N_\ve$.\\
In the similar way  we define $\mathcal L^\ve\psi$ and  $\mathcal L^\ve_0\psi$ for $\psi$ in
  $C(\overline\Omega; L^q_{\text{per}}(Y_x))$ or  $L^p(\Omega; C_{\text{per}}(Y_x))$. 
\\
In the proof of  convergence theorem we shall   use the regular approximation of $\mathcal L^\ve \psi$ 
 \[
 (\mathcal L^\ve_{\rho} \psi)(x)= \sum\limits_{n=1}^{N_\ve} \tilde \psi\Big(x, \frac {D^{-1}_{x_n^\ve} x}\ve\Big)\phi_{\Omega_n^\ve}(x)  
\quad \text{ for } x\in \Omega,
\]
where  $\phi_{\O_n^\ve}$  are approximations of    $\chi_{\Omega_n^\ve}$  
such that $\phi_{\O_n^\ve} \in C^\infty_0 (\Omega_n^\ve)$ and 
\begin{equation}\label{ApproxCharactF}
 \sum\limits_{n=1}^{N_\ve}|\phi_{\O_n^\ve}  -\chi_{\Omega_n^\ve}| \to 0 \, \text{ in }  L^2(\Omega),\,   \, 
||\nabla^m \phi_{\O_n^\ve}||_{L^\infty(\mathbb R^d)}\leq C \ve^{-\rho m} \, \text{  for  } 0<r<\rho<1. 
\end{equation}
We can consider $\phi_{\Omega_n^\ve}(x)= \varphi_\ve\ast \chi_{\Omega_{n,\rho}^\ve}$  for $\O_n^\ve \subset \Omega$  and $\phi_{\Omega_n^\ve}(x)=0$  for $\Omega_n^\ve\cap \partial \Omega\neq \emptyset$, where   
$\varphi_{\ve}(x)= \frac 1{\ve^{n\rho}}\varphi(\frac x {\ve^\rho})$, with  $\varphi(x)=c\exp(-1/(1-|x|^2))$ for $|x|<1$ and $\varphi(x)=0$ for $|x|\geq 1$ and $\Omega_{n,\rho}^\ve=\{x\in \O_n^\ve, \text{dist}( x, \partial\O_n^\ve)> \ve^\rho\}$. The constant $c$ is such that $\int_{\mathbb R^d} \varphi(x) dx =1$. Then $\phi_{\O_n^\ve} \in C^\infty_0 (\Omega_n^\ve)$  and \eqref{ApproxCharactF} follow from the properties  of $\varphi_{\ve}$. Notice that 
$\|\sum_{n=1}^{N_\ve}|\phi_{\O_n^\ve}  -\chi_{\Omega_n^\ve}|\|_{L^2(\Omega)}\leq c_1 \ve^{r(d-1)} \ve^\rho N_\ve\leq c_2 \ve^{\rho-r}.$ \\
 Another  construction of $\phi_{\Omega_n^\ve}$ can be found in \cite{Briane1}.

{\it Example.}
Let $\Omega=(0,1)$, $Y=[0,1]$,  and the family of cubes $\Omega_n^\ve= (n-1, n)\ve^r$ with $\ve= N^{-1/r}$ for some $N\in \mathbb N$, and  $n=1, \ldots, N_\ve$, where  $N_\ve=N=\ve^{-r}$.  With  $D(x)=e^x$  for $x\in \overline\Omega$ we obtain the   family of intervals   $Y_x= [0,e^x]$.
 We consider $\psi(x,y)= x+\sin(2\pi e^{-x} y)$ in  $C(\overline\Omega; C_{\text{per}}(Y_x))$ and  corresponding  $\tilde \psi(x,\tilde y) = x+\sin(2\pi \tilde y)$ in $C(\overline\Omega; C_{\text{per}}(Y))$. Then the locally-periodic approximation  of $\psi$ is given by 
$\mathcal L^\ve \psi(x)= \sum_{n=1}^{N_\ve} \left( x + \sin(2\pi e^{-x_n^\ve} x/\ve)\right)\chi_{\Omega_n^\ve}(x)$, with e.g.
 $x_n^\ve=(n-1/2)\ve^r$, and   $\mathcal L^\ve_0 \psi(x)= \sum_{n=1}^{N_\ve} \left(x_n^\ve + 
\sin(2\pi e^{-x_n^\ve} x/\ve)\right)\chi_{\Omega_n^\ve}(x)$. 

\begin{definition}\label{def_two-scale}
Let  $u^\ve\in  L^2(\Omega)$ for all $\ve >0$. We say   the sequence $\{u^\ve\}$ converges locally-periodic two-scale (l-t-s) to $u \in L^2(\Omega; L^2(Y_x)) $ as $\ve \to 0$ if for any $\psi \in L^2(\Omega; C_{\text{per}}(Y_x))$ 
\begin{eqnarray*}
\lim\limits_{\ve \to 0}\int_{\Omega} u^\ve(x) \mathcal L^\ve\psi(x) dx = \int_\Omega   \ddashinttt_{Y_x}  u(x,y) \psi(x, y)  dy dx,
\end{eqnarray*}
where $\mathcal L^\ve\psi $ is the locally-periodic approximation of $\psi$.
\end{definition}

Notice that taking in the Definition \ref{def_two-scale} a test function $\psi$ independent of $y$ yields
 \begin{equation*}
  u^\ve(x) \rightharpoonup  \ddashinttt_{Y_x}  u(x,y) dy \quad \text{ in }\quad  L^2(\Omega).
\end{equation*}
The subsequent convergence results for the locally-periodic approximation will  ensure  that Definition \ref{def_two-scale} does not depend on the choice of  $x_n^\ve, \tilde x_n^\ve \in \Omega_n^\ve$ for $n=1, \ldots, N_\ve$. 
 
The main results of this section are the compactness theorem for the locally-periodic  two-scale convergence and the characterisation of  the locally-periodic two-scale limit for  a sequence bounded in $H^1(\Omega)$.
\begin{theorem}\label{L2_lp_covregence}
Let $\{u^\ve\}$ be a bounded sequence in $L^2(\Omega)$. Then there exists a subsequence  of $\{u^\ve\}$, denoted again by $\{u^\ve\}$, and a  function $u \in L^2(\Omega; L^2(Y_x))$, such that  $u^\ve \to u$ in the locally-periodic two-scale sense as $\ve \to 0$. 
\end{theorem}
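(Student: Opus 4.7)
The plan is to mimic Nguetseng's original argument but with the locally-periodic approximation $\mathcal L^\ve$ playing the role of the periodic test function $\psi(\cdot, \cdot/\ve)$. Throughout, the separability of $L^2(\Omega; C_{\text{per}}(Y_x))$ (established earlier in the paper) plays a central role.

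\medskip

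The first and main step is to establish the key auxiliary estimate
\[
\limsup_{\ve \to 0} \|\mathcal L^\ve \psi\|_{L^2(\Omega)}^2 \;\leq\; \int_\Omega \ddashinttt_{Y_x} |\psi(x,y)|^2\, dy\, dx \;\leq\; C\,\|\psi\|^2_{L^2(\Omega; C_{\text{per}}(Y_x))}
\]
for every $\psi \in L^2(\Omega; C_{\text{per}}(Y_x))$; in fact I would aim for equality in the first inequality, which gives the ``mean value'' property for the locally-periodic approximation. The key point is that on each cube $\Omega_n^\ve$ of side $\ve^r$ the function $\tilde\psi(x, D_{x_n^\ve}^{-1}x/\ve)$ is frozen at $x_n^\ve$ in its slow argument (up to an error controlled by the modulus of continuity of $\tilde\psi$, which is uniform when $\psi$ is continuous), while $D_{x_n^\ve}^{-1}x/\ve$ runs through approximately $\ve^{r-1}$ periods inside $\Omega_n^\ve$. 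A standard lattice-averaging argument on a single cube, combined with the change of variables $y = D_{x_n^\ve}^{-1}x$ and the change of normalisation $|\det D_{x_n^\ve}|$ appearing in the definition of $L^2(\Omega;L^2(Y_x))$, gives
\[
\int_{\Omega_n^\ve} |\mathcal L^\ve\psi|^2\, dx \;=\; |\Omega_n^\ve|\,\ddashinttt_{Y_{x_n^\ve}}|\psi(x_n^\ve,y)|^2\, dy \;+\; o(|\Omega_n^\ve|).
\]
Summing over $n=1,\dots,\tilde N_\ve$, recognising a Riemann sum for the right-hand side, and controlling the contribution of the boundary cubes by $|\mathcal K^\ve|\leq C\ve^r \to 0$, yields the estimate. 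A standard density argument then extends it from continuous to general $\psi\in L^2(\Omega;C_{\text{per}}(Y_x))$.

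\medskip

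With this estimate in hand I would define, for each $\ve>0$, the linear functional
\[
F^\ve\colon L^2(\Omega; C_{\text{per}}(Y_x)) \longrightarrow \mathbb R, \qquad F^\ve(\psi)=\int_\Omega u^\ve(x)\,\mathcal L^\ve\psi(x)\, dx.
\]
By Cauchy--Schwarz and the boundedness of $\{u^\ve\}$ in $L^2(\Omega)$ together with the auxiliary estimate, $\{F^\ve\}$ is uniformly bounded on the separable Banach space $L^2(\Omega; C_{\text{per}}(Y_x))$. Extracting a diagonal subsequence along a countable dense set, followed by an $\varepsilon/3$-argument using the uniform bound, yields a limit functional $F$ with $F^\ve(\psi)\to F(\psi)$ for every admissible $\psi$. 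Passing to the limit in the auxiliary estimate gives
\[
|F(\psi)| \;\leq\; C\Bigl(\int_\Omega \ddashinttt_{Y_x}|\psi(x,y)|^2\, dy\, dx\Bigr)^{1/2},
\]
so $F$ extends uniquely to a bounded linear functional on the Hilbert space $L^2(\Omega;L^2(Y_x))$, once I verify that $L^2(\Omega;C_{\text{per}}(Y_x))$ is dense in $L^2(\Omega;L^2(Y_x))$ --- this follows from the density of $C_{\text{per}}(Y)$ in $L^2(Y)$ fibrewise together with the explicit isometric identification with $L^2(\Omega;L^2(Y))$ weighted by $|\det D_x|$. The Riesz representation theorem then supplies $u\in L^2(\Omega;L^2(Y_x))$ with
\[
F(\psi) = \int_\Omega \ddashinttt_{Y_x} u(x,y)\,\psi(x,y)\, dy\, dx,
\]
which is exactly the locally-periodic two-scale limit property.

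\medskip

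The main obstacle is the first step: verifying carefully that the mean-value identity for $\mathcal L^\ve$ survives the combined effects of the space-dependent cell $Y_x$, the non-matching of $\tilde x_n^\ve+\ve D_{x_n^\ve}\mathbb Z^d$ with the face of $\Omega_n^\ve$, and the boundary region $\mathcal K^\ve$. The condition $0<r<1$ is exactly what makes $|\Omega_n^\ve|=\ve^{rd}$ much larger than the period $\ve^d$ (so that enough full periods fit in each cube for the lattice average to be accurate) while still keeping $|\mathcal K^\ve|\to 0$, and the Lipschitz regularity of $D$ ensures that the error from freezing $D_x$ at $x_n^\ve$ across a cube of diameter $\ve^r$ is uniformly $o(1)$. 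Once this is in place, the remaining functional-analytic extraction is essentially routine.
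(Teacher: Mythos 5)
Your proposal follows essentially the same route as the paper: establish the mean-value estimate for $\mathcal L^\ve\psi$ (which the paper proves separately as Lemma~\ref{converg_ost}, eq.~\eqref{LocPeriodConver}, via the same cube-by-cube lattice averaging, boundary-layer control via $|\mathcal K^\ve|\lesssim\ve^r$, and density from $C(\overline\Omega;C_{\text{per}}(Y_x))$), then view $\psi\mapsto\int_\Omega u^\ve\,\mathcal L^\ve\psi\,dx$ as a uniformly bounded family of functionals on the separable space $L^2(\Omega;C_{\text{per}}(Y_x))$, extract a weak-$*$ limit, upgrade the bound to the $L^2(\Omega;L^2(Y_x))$-norm by passing to the limit in the mean-value estimate, and conclude by Riesz representation after noting density of $L^2(\Omega;C_{\text{per}}(Y_x))$ in $L^2(\Omega;L^2(Y_x))$. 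This matches the paper's argument step for step.
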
 
\begin{theorem}\label{compactness_theorem}
 Let $\{u^\ve\} $ be a bounded sequence in $H^1(\Omega)$ that converges weakly to $u$ in $H^1(\Omega)$. Then
\begin{itemize}
\item[$\circ$]   the sequence $\{u^\ve\}$ converges locally-periodic two-scale to $u$;
\item[$\circ$]  there exist a subsequence of $\{\nabla u^\ve \}$,  denoted again by $\{\nabla u^\ve \}$, and a function  $u_1\in L^2(\Omega; H^1_{\text{per}}(Y_x)/\mathbb R)$  such that $\nabla u^\ve \to \nabla u + \nabla_y u_1$ in the locally-periodic two-scale sense  as $\ve \to 0$. 
 \end{itemize}
\end{theorem}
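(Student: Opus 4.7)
The strategy is to parallel Allaire's proof for periodic two-scale convergence, with $\mathcal L^\ve\psi$ (and its smooth approximant $\mathcal L^\ve_\rho \psi$) playing the role of the periodic test function $\psi(x,x/\ve)$. The first bullet is immediate: since $\{u^\ve\}$ is bounded in $H^1(\Omega)$, Rellich--Kondrachov gives $u^\ve \to u$ strongly in $L^2(\Omega)$, so passing to the limit in the pairing $\int_\Omega u^\ve\, \mathcal L^\ve \psi \, dx$, combined with the weak $L^2$ convergence $\mathcal L^\ve \psi \rightharpoonup \dashint_{Y_x}\psi\,dy$ established earlier in the section, yields l-t-s convergence with $y$-independent limit $u$. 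For the second bullet, first apply Theorem \ref{L2_lp_covregence} componentwise to the bounded sequence $\{\nabla u^\ve\} \subset L^2(\Omega)^d$ to extract a subsequence and a function $\xi \in L^2(\Omega; L^2(Y_x)^d)$ with $\nabla u^\ve \to \xi$ in the l-t-s sense.

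The identification $\xi(x,y) = \nabla u(x) + \nabla_y u_1(x,y)$ proceeds in two steps. First, testing with $\varphi \in C_0^\infty(\Omega)^d$ independent of $y$ (so $\mathcal L^\ve \varphi = \varphi$) and using $\nabla u^\ve \rightharpoonup \nabla u$ weakly in $L^2$ shows $\dashint_{Y_x}\xi(x,y)\,dy = \nabla u(x)$. Second, to extract the $y$-gradient structure of the fluctuating part, I would choose $\Psi \in C_0^\infty(\Omega; C^\infty_{\text{per}}(Y_x;\mathbb R^d))$ satisfying $\text{div}_y \Psi(x,\cdot) = 0$ in $Y_x$ for all $x\in\Omega$ (equivalently, $\text{div}_{\tilde y}(D_x^{-1}\tilde\Psi(x,\tilde y)) = 0$ in $Y$) and pair $\nabla u^\ve$ with the regularized locally-periodic approximation $\mathcal L^\ve_\rho \Psi$, on which integration by parts is legitimate thanks to the cut-offs $\phi_{\Omega_n^\ve} \in C_0^\infty(\Omega_n^\ve)$. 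The l-t-s convergence of $\nabla u^\ve$ gives
\[
\int_\Omega \nabla u^\ve \cdot \mathcal L^\ve_\rho \Psi \, dx \;\longrightarrow\; \int_\Omega \dashint_{Y_x} \xi \cdot \Psi \,dy\,dx.
\]

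Computing $\text{div}(\mathcal L^\ve_\rho \Psi)$ by the chain rule yields three contributions: a bounded slow-derivative term $\mathcal L^\ve_\rho[\text{div}_x \tilde\Psi]$ whose weak $L^2$ limit is $\dashint_Y \text{div}_x \tilde\Psi\,d\tilde y$; the boundary term $\sum_n \tilde\Psi(x, D_{x_n^\ve}^{-1}x/\ve) \cdot \nabla\phi_{\Omega_n^\ve}(x)$, controlled using the central estimate $\ve\|\nabla\phi_{\Omega_n^\ve}\|_{L^\infty} = O(\ve^{1-\rho}) \to 0$ for $r<\rho<1$ highlighted in the introduction; and the delicate middle term $\tfrac1\ve \sum_n \text{div}_{\tilde y}(D_{x_n^\ve}^{-1}\tilde\Psi(x,\cdot))|_{\tilde y = D_{x_n^\ve}^{-1}x/\ve}\,\phi_{\Omega_n^\ve}(x)$. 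The divergence-free assumption makes this middle expression vanish if $D_{x_n^\ve}^{-1}$ were replaced by $D_x^{-1}$, leaving a residue controlled by $D_{x_n^\ve}^{-1} - D_x^{-1} = O(\ve^r)$. Once the middle term is shown to vanish against the strongly convergent $u^\ve$, integration by parts on the slow term gives $-\int_\Omega u \,\dashint_{Y_x} \text{div}_x \tilde\Psi\,dy\,dx = \int_\Omega \nabla u \cdot \dashint_{Y_x} \Psi\,dy\,dx$, producing
\[
\int_\Omega \dashint_{Y_x} (\xi - \nabla u)\cdot \Psi \,dy\,dx = 0
\]
for every admissible divergence-free $\Psi$. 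A Helmholtz-type orthogonal decomposition in $L^2(\Omega; L^2(Y_x)^d)$ then furnishes $u_1 \in L^2(\Omega; H^1_{\text{per}}(Y_x)/\mathbb R)$ with $\xi - \nabla u = \nabla_y u_1$.

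The principal obstacle is controlling the middle term: although formally $O(\ve^{r-1})$, the periodicity of $\tilde\Psi$ implies $\int_Y \partial_{\tilde y_k}\tilde\Psi_i\,d\tilde y = 0$, so the rapidly oscillating factor has vanishing cell average. Combined with the Lipschitz (or, when $D\in C^2$, Taylor-expansion) regularity of $D^{-1}$, which separates the slow increment $x - x_n^\ve$ from the fast oscillation, and the convergence theorems for $\mathcal L^\ve$ applied to mean-zero integrands proven earlier, this oscillation cancellation produces the required vanishing in the limit. This is precisely where the locally-periodic scaling ($r<1$, so the cell size $\ve^r$ is much larger than $\ve$) and the regularity of $D$ enter essentially.
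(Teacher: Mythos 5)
Your first bullet is a correct, legitimate shortcut: Rellich--Kondrachov gives strong $L^2$-convergence of $u^\ve$, and pairing this with the weak $L^2$-convergence of $\mathcal L^\ve\psi$ (which indeed follows from \eqref{LocPeriodConver_L1} applied to $\phi\,\psi$ for $\phi\in C(\overline\Omega)$ and a density argument) yields the l-t-s limit $u$. The paper instead shows that the l-t-s limit of $\{u^\ve\}$ is $y$-independent by testing with $\ve\,\mathcal L^\ve_\rho\psi$; your route is a bit slicker but requires you to actually state and verify the weak convergence of $\mathcal L^\ve\psi$, which the paper proves only for $\psi$ of the special form $\tilde f(D_x^{-1}y)$.

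For the second bullet, the high-level skeleton (extract $\xi$ via Theorem~\ref{L2_lp_covregence}, test with divergence-free $\Psi$, integrate by parts, Helmholtz) matches the paper, but the treatment of the singular terms after integration by parts has two genuine gaps.

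First, your ``boundary term'' $\sum_n \tilde\Psi(x,D_{x_n^\ve}^{-1}x/\ve)\cdot\nabla\phi_{\Omega_n^\ve}$ carries no factor of $\ve$: integrating $\int\nabla u^\ve\cdot\mathcal L^\ve_\rho\Psi$ by parts produces $-\int u^\ve\,\mathrm{div}(\mathcal L^\ve_\rho\Psi)$, and the piece of $\mathrm{div}(\mathcal L^\ve_\rho\Psi)$ proportional to $\nabla\phi_{\Omega_n^\ve}$ has no small prefactor, so it is $O(\ve^{-\rho})$ in $L^\infty$. The estimate $\ve\|\nabla\phi_{\Omega_n^\ve}\|_\infty = O(\ve^{1-\rho})\to 0$ that you invoke controls the \emph{other} places where a factor $\ve$ multiplies $\nabla\phi_{\Omega_n^\ve}$ (e.g.\ in the argument that the l-t-s limit of $\{u^\ve\}$ is $y$-independent); it does nothing here. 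In the paper, this term is handled by first subtracting the cell mean from $\Psi$ (which contributes a benign slow term) and then invoking Lemma~\ref{H_1_conv}, whose proof is a recursive Helmholtz-type representation $\tilde\Psi-\dashint\Psi=\Delta_{\tilde y}h_1$ iterated until the power of $\ve$ beats $\ve^{-\rho}$ in $(H^1(\Omega))'$. You neither cite this lemma nor supply a replacement.

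Second, your ``middle term'' $\frac{1}{\ve}\sum_n\mathrm{div}_{\tilde y}\bigl(D_{x_n^\ve}^{-T}\tilde\Psi\bigr)\phi_{\Omega_n^\ve}$, after using $\mathrm{div}_{\tilde y}(D_x^{-T}\tilde\Psi)=0$, is of size $\ve^{r-1}$ in $L^\infty$ and therefore unbounded in $L^2$; you only assert that ``oscillation cancellation produces the required vanishing in the limit.'' This would again require a quantitative $(H^1)'$ estimate in the style of Lemma~\ref{H_1_conv}, not merely the observation that $\partial_{\tilde y_k}\tilde\Psi_i$ has zero cell average. The paper sidesteps this problem entirely: instead of $\mathcal L^\ve_\rho\Psi$ (slow variable $x$), it uses $\mathcal L^\ve_0\Psi$ (slow variable frozen at $x_n^\ve$) in the crucial term $I_2$, and then \eqref{part_int} shows the fast divergence of $\Psi(x_n^\ve,\cdot/\ve)$ vanishes \emph{identically}, so no middle term ever appears. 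The difference $\mathcal L^\ve\Psi-\mathcal L^\ve_0\Psi$, and the replacement of $\chi_{\Omega_n^\ve}$ by $\phi_{\Omega_n^\ve}$, are lower-order and are estimated separately ($I_1$, $I_3$, $I_5$). Adopting the frozen-slow-variable ansatz and proving an analogue of Lemma~\ref{H_1_conv} are the two essential ingredients your sketch is missing; without them the convergence of $I_2$ and of the $\frac{1}{\ve}$-term is not established.
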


The proofs of the main  results  rely on some convergence results for  locally-periodic approximations of   $Y_x$-periodic  functions.
\begin{lemma}\label{converg_ost}
For   $\psi \in  L^p(\Omega; C_{\text{per}}(Y_x))$, with  $1\leq p <\infty$, we have 
\begin{eqnarray}\label{LocPeriodConver}
\lim\limits_{\ve \to 0} \int_{\Omega}\left|\mathcal L^\ve \psi (x) \right|^p  dx = 
 \int_\Omega \, \ddashinttt_{Y_x} |\psi(x, y)|^p dy\,  dx
\end{eqnarray}
and for  $\psi \in  L^1(\Omega; C_{\text{per}}(Y_x))$ we obtain 
\begin{eqnarray}\label{LocPeriodConver_L1}
\lim\limits_{\ve \to 0} \int_{\Omega} \mathcal L^\ve \psi (x)  dx = 
 \int_\Omega \, \ddashinttt_{Y_x} \psi(x, y) dy\,  dx.
\end{eqnarray}
For $\psi \in C(\overline\Omega; L^p_{\text{per}}(Y_x))$, where  $1\leq p <\infty$, we have   
 \begin{eqnarray}\label{EstimOscil2}
\lim\limits_{\ve \to 0}\int_{\Omega} \, \left|\mathcal L^\ve_0 \psi (x) \right|^p dx =  \int_\Omega\,  \ddashinttt_{Y_x} |\psi(x, y)|^p dy\,  dx
\end{eqnarray}
and for $ \psi \in C(\overline\Omega; L^1_{\text{per}}(Y_x))$ we obtain 
 \begin{eqnarray}\label{EstimOscil2_L1}
\lim\limits_{\ve \to 0}\int_{\Omega} \,  \mathcal L^\ve_0 \psi (x) dx =  \int_\Omega\,  \ddashinttt_{Y_x} \psi(x, y)  dy\,  dx.
\end{eqnarray}
If $\psi \in L^p(\overline\Omega; C^1_{\text{per}}(Y_x))$,  with $1\leq p <\infty$, then we have 
\begin{eqnarray}\label{grad_loc}
 \lim\limits_{\ve \to 0} \int_\Omega\,  \left|\mathcal L^\ve \nabla_y \psi (x)\right|^p  dx= \int_\Omega \,   \ddashinttt_{Y_x} |\nabla_y \psi(x, y)|^p  dy dx.
\end{eqnarray}
 \end{lemma}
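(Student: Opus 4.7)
My plan is to reduce all five limits to the case of smooth test functions $\psi\in C(\overline\Omega;C_{\text{per}}(Y_x))$ via density, and for the smooth case to combine a cube-by-cube periodic mean-value estimate with a Riemann-sum approximation.

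For the smooth case of \eqref{LocPeriodConver} I would split $\int_\Omega|\mathcal L^\ve\psi|^p dx=\sum_{n=1}^{\tilde N_\ve}\int_{\Omega_n^\ve}|\tilde\psi(x,D^{-1}_{x_n^\ve}x/\ve)|^p dx+R^\ve$, where $R^\ve$ collects the boundary cubes forming $\mathcal K^\ve$ and is bounded by $|\mathcal K^\ve|\,\|\psi\|_\infty^p\leq C\ve^r\to 0$. On each interior cube $\Omega_n^\ve$ of side $\ve^r$, uniform continuity of $|\tilde\psi|^p$ in the first variable with modulus $\omega$ lets me replace $\tilde\psi(x,\cdot)$ by $\tilde\psi(x_n^\ve,\cdot)$ with per-cube error $\omega(\ve^r)|\Omega_n^\ve|$, which sum to $\omega(\ve^r)|\Omega|\to 0$. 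The remaining function $x\mapsto|\tilde\psi(x_n^\ve,D^{-1}_{x_n^\ve}x/\ve)|^p$ is $\ve D_{x_n^\ve}\mathbb Z^d$-periodic, and since $\Omega_n^\ve$ contains $\sim\ve^{d(r-1)}$ full periods (using $r<1$), a standard counting of whole periodic cells against a boundary strip of thickness $\ve$ yields
\[
\frac{1}{|\Omega_n^\ve|}\int_{\Omega_n^\ve}\Big|\tilde\psi\Big(x_n^\ve,\frac{D^{-1}_{x_n^\ve}x}{\ve}\Big)\Big|^p dx=\int_Y|\tilde\psi(x_n^\ve,\tilde y)|^p d\tilde y+O(\ve^{1-r})\|\tilde\psi\|_\infty^p,
\]
uniformly in $n$. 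Multiplying by $|\Omega_n^\ve|=\ve^{rd}$ and summing over the $\tilde N_\ve\leq C\ve^{-rd}$ interior cubes produces a Riemann sum for the continuous map $x\mapsto\int_Y|\tilde\psi(x,\tilde y)|^p d\tilde y$, converging to $\int_\Omega\int_Y|\tilde\psi(x,\tilde y)|^p d\tilde y\,dx=\int_\Omega\ddashinttt_{Y_x}|\psi(x,y)|^p dy\,dx$ after the change of variable $y=D_x\tilde y$ (noting $|Y_x|=|\det D_x|$).

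To extend to general $\psi\in L^p(\Omega;C_{\text{per}}(Y_x))$ I use the pointwise bound $|\mathcal L^\ve\psi(x)|\leq\sup_{y\in Y_x}|\psi(x,y)|$ for $x\in\cup\Omega_n^\ve$, which gives the contractivity $\|\mathcal L^\ve\psi\|_{L^p(\Omega)}\leq\|\psi\|_{L^p(\Omega;C_{\text{per}}(Y_x))}$; combining this with density of $C(\overline\Omega;C_{\text{per}}(Y_x))$ in $L^p(\Omega;C_{\text{per}}(Y_x))$ and the parallel norm bound $\|\ddashinttt_{Y_x}|\psi|^p dy\|_{L^1(\Omega)}\leq\|\psi\|_{L^p(\Omega;C_{\text{per}}(Y_x))}^p$ yields \eqref{LocPeriodConver} by a standard density argument. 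The identity \eqref{LocPeriodConver_L1} follows from the same cube-by-cube argument with $|\cdot|^p$ replaced by the identity, plus the same density step. For \eqref{EstimOscil2} and \eqref{EstimOscil2_L1} the same scheme applies to $\mathcal L^\ve_0$; here the continuity-in-$x$ reduction is trivial since $x$ is replaced by $x_n^\ve$ from the outset, and the density extension uses continuity of $\psi$ in $x\in\overline\Omega$ together with density of $C_{\text{per}}(Y)$ in $L^p_{\text{per}}(Y)$ via the estimate $\|\mathcal L^\ve_0\psi\|_{L^p(\Omega)}^p\leq\sum_n|\Omega_n^\ve|\,\|\tilde\psi(x_n^\ve,\cdot)\|_{L^p(Y)}^p(1+o(1))$ that follows from the smooth case applied to $|\psi|$. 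Finally, \eqref{grad_loc} is just \eqref{LocPeriodConver} applied componentwise to $\nabla_y\psi\in L^p(\Omega;C_{\text{per}}(Y_x))^d$ and requires no new argument.

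The main obstacle is the quantitative periodic mean-value estimate on each cube $\Omega_n^\ve$: the per-cube error $O(\ve^{1-r})$ must survive summation over $O(\ve^{-rd})$ cubes, and it does because the factor $|\Omega_n^\ve|=\ve^{rd}$ from the Riemann sum compensates exactly, giving a total error $O(\ve^{1-r})\to 0$ \emph{precisely} because $r<1$. This is where the locally-periodic scale hypothesis enters essentially. Uniformity in $n$ of both the averaging error (via $\|\tilde\psi\|_\infty$) and the modulus of continuity in $x$ (via $\psi\in C(\overline\Omega;C_{\text{per}}(Y_x))$) is what makes the subsequent density step valid.
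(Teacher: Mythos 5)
Your proposal follows essentially the same route as the paper: decompose over the cube covering $\{\Omega_n^\ve\}$, separate boundary cubes from interior ones, replace $\tilde\psi(x,\cdot)$ by $\tilde\psi(x_n^\ve,\cdot)$ via uniform continuity, average the frozen periodic function over full periodic cells of $\Omega_n^\ve$ with boundary-strip error $O(\ve^{1-r})$, pass to a Riemann sum, and extend by density. One minor streamlining worth noting: you handle the density step for \eqref{LocPeriodConver} via the clean contractivity bound $\|\mathcal L^\ve\psi\|_{L^p(\Omega)}\leq\|\psi\|_{L^p(\Omega;C_{\text{per}}(Y_x))}$ together with linearity of $\mathcal L^\ve$, whereas the paper instead estimates $\big||\mathcal L^\ve\psi|^p-|\mathcal L^\ve\psi_m|^p\big|$ pointwise through an intermediate comparison of $D^{-1}_{x_n^\ve}$ with $D^{-1}_x$; and you treat the $\mathcal L^\ve_0$ statements by a further density reduction, whereas the paper computes them directly from the covering identity (since only $\tilde\psi(x_n^\ve,\cdot)$ appears, the cell-by-cell change of variables already works for $L^p_{\text{per}}(Y)$-valued functions without needing $C_{\text{per}}$-regularity in $y$) — both variants are sound.
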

 
\begin{proof}  We start with the proof of \eqref{LocPeriodConver} for $\psi \in C(\overline\Omega; C_{\text{per}}(Y_x))$.
The translations of  $Y=[0,1]^d$ are given by $Y^i= Y+ k_i$ with $k_i \in \mathbb Z^d $. Then for   $ x_n^\ve \in \Omega_n^\ve$   and $D_{x_n^\ve}=D(x_n^\ve)$ we consider   $Y_{x_n^\ve}^i= D_{x_n^\ve} Y^i$, obtained by the linear transformation of  $Y^i$,  and 
cover   $\Omega_n^\ve$, with $1\leq n \leq N_\ve$, by the family of closed parallelepipeds $\{\ve Y_{x_n^\ve}^i\}_{i=1}^{I_n^\ve}$:
\[
\Omega_n^\ve \subset \cup_{i=1}^{I_{n}^\ve} \ve  Y_{x_n^\ve}^i \quad \text{ such that } \,\,  \ve Y_{x_n^\ve}^i \cap \overline\Omega_n^\ve \neq \emptyset,
\] 
where  $I_n^\ve$ is  the number of parallelepipeds covering  $\Omega_n^\ve$. The  domain $\mathcal M_n^\ve$ given by
\[
\mathcal M_n^\ve = \Omega_n^\ve \setminus \Big( \cup_{i=1}^{\tilde I_n^\ve} \ve  Y_{x_n^\ve}^i \Big), 
\]
with $\tilde I_n^\ve$ as the number of all  $\ve Y_{x_n^\ve}^i$ enclosed in $\Omega_n^\ve$, satisfies 
 $ \mathcal M_n^\ve \subset \cup_{i=\tilde I_n^\ve+1}^{I_{n}^\ve} \ve  Y_{x_n^\ve}^i. $
All  $\ve  Y_{x_n^\ve}^i$, for $i=\tilde I_n^\ve+1,\ldots, I_n^\ve$, have the non-empty intersection with $\partial \Omega_n^\ve$ and are enclosed in a $\ve$-neighbourhood of $\partial \Omega_n^\ve$.  This ensures the estimate
$$|\mathcal M_n^\ve| \leq  C_1|\partial \Omega_n^\ve| \ve\leq  C_2 \, \ve^{(d-1)r+1} \qquad \, \text{ for all } \, \, n=1,\ldots, N_\ve. $$ 
Thus we have 
$ \ve^d |\det D_{x_n^\ve}| \tilde I_n^\ve \leq \ve^{dr}$ and $\ve^{d} |\det D_{x_n^\ve}| I_n^\ve \leq C \ve^{dr}$ for $\ve \leq 1$, i.e.
$$
\tilde I_n^\ve \leq C\ve^{d(r-1)} \quad \text{ and } \quad I_n^\ve \leq C \ve^{d(r-1)}\quad \text{ for all } \, \, n=1,\ldots, N_\ve.
$$
Considering the covering of $\Omega_n^\ve$, we rewrite the integral on  the left hand side of \eqref{LocPeriodConver}  
\begin{eqnarray*}
  \int_{\Omega} \left|\mathcal L^\ve \psi(x)\right|^p dx 
&  =& \sum\limits_{n=1}^{\tilde N_\ve}  \sum\limits_{i=1}^{\tilde I_n^\ve} \int_{\ve Y_{x_n^\ve}^i} 
\Big|\tilde \psi\Big(x_n^\ve, \frac {D^{-1}_{x_n^\ve}x}\ve\Big)\Big|^p dx \\
&+& \sum\limits_{n=1}^{\tilde N_\ve} \sum\limits_{i=1}^{\tilde I_n^\ve}  \int_{\ve Y_{x_n^\ve}^i} 
\Big(\Big|\tilde \psi\Big(x,  \frac{D^{-1}_{x_n^\ve}x} \ve\Big)\Big|^p-\Big|\tilde \psi\Big(x_n^\ve,  \frac {D^{-1}_{x_n^\ve}x} \ve\Big) \Big|^p \Big)dx \\
&+&
 \sum\limits_{n=1}^{\tilde N_\ve} \int_{\mathcal M_n^\ve} \Big|\tilde\psi\Big(x, \frac  {D^{-1}_{x_n^\ve}x}\ve\Big)\Big|^p dx
+  \int_{\mathcal K^\ve}   \left|\mathcal L^\ve \psi(x)\right|^p dx= I_1+I_2+I_3+I_4. 
\end{eqnarray*}  
Applying  the inequality $||a|^p-|b|^p| \leq p ( |a|^{p-1}+|b|^{p-1})|a-b|$, see \cite{Alt}, the assumptions on $D$, the continuity $\psi$, the bounds for $\tilde N_\ve$ and $\tilde I_n^\ve$, and the property 
 $$\sup\limits_{1\leq i\leq \tilde I_n^\ve}\sup\limits_{x\in \ve Y_{x_n^\ve}^i}|x-x_n^\ve| \leq C \ve^{r}, $$  
ensured by the fact that $|\Omega_n^\ve|=\ve^r$ for all $1\leq n \leq N_\ve$,  we can estimate  $I_2$   by
\begin{equation}\label{Estim_Contin}
2p \| \psi\|_{C(\overline\Omega, C_{\text{per}}(Y_x))}^{p-1}\sum\limits_{n=1}^{\tilde N_\ve}  \sum\limits_{i=1}^{\tilde I_n^\ve} |\ve  Y_{x_n^\ve}^i|\sup\limits_{x\in \ve Y_{x_n^\ve}^i}\sup\limits_{\tilde y\in  Y}
|\tilde \psi(x,\tilde y) -\tilde \psi(x_n^\ve, \tilde y)| 
\leq C \delta_1(\ve) |\Omega|, 
\end{equation}
where $ \delta_1(\ve)\to 0$ as $\ve \to 0$. For  $I_3$ and $I_4$
  the regularity of $\psi$ together with the bounds for $|\mathcal K^\ve|$,   $|\mathcal M_n^\ve|$, and  $\tilde N_\ve$ implies 
\begin{equation}\label{Estim_rest} 
|I_3|+|I_4|\leq \|\psi(x,y)\|_{C(\overline\Omega, C_{\text{per}}(Y_x))}^p\Big( \sum\limits_{n=1}^{\tilde N_\ve} |\mathcal M_n^\ve | + |\mathcal K^\ve|\Big) \leq  C(\ve^{1-r}+ \ve^r).
\end{equation}
Combining now \eqref{Estim_Contin} and \eqref{Estim_rest} gives
\begin{eqnarray}\label{Estim_Int_LocPerApprox} 
\qquad  \int_{\Omega}\left|\mathcal L^\ve \psi(x)\right|^p dx \hspace{-0.1 cm }
&=&  \hspace{-0.1 cm } \sum\limits_{n=1}^{\tilde N_\ve} \tilde I_n^\ve \ve^d  |\det D_{x_n^\ve}| \int_{Y} |\tilde \psi(x_n^\ve, \tilde y)|^p d\tilde y
+ \delta(\ve) \\
 \hspace{-0.1 cm } &=& \hspace{-0.1 cm }\int_\Omega  \sum\limits_{n=1}^{\tilde N_\ve}  \ddashinttt_{Y_{x_n^\ve}} |\psi(x_n^\ve, y)|^p dy \, \chi_{\Omega_n^\ve}(x)\,  dx 
 -\sum\limits_{n=1}^{\tilde N_\ve}|\mathcal M_n^\ve|  \ddashinttt_{Y_{x_n^\ve}} |\psi(x_n^\ve, y)|^p dy+ \delta(\ve),\nonumber
\end{eqnarray}
where $\delta(\ve) \to 0$ as $\ve\to 0$. 
 The  continuity of $\psi$ with respect to $x$ ensures
\begin{equation}\label{estimM_n_ve}
\sum\limits_{n=1}^{\tilde N_\ve}|\mathcal M_n^\ve|  \ddashinttt_{Y_{x_n^\ve}} |\psi(x_n^\ve, y)|^p dy
\leq C_1 \ve^{1-r} \sup\limits_{x\in \Omega} \int_{ Y } |\tilde \psi(x, \tilde y)|^p d\tilde y \leq C_2 \ve^{1-r}.
\end{equation} 
Thus, the continuity of  $F$, given by  $F(x)= \ddashinttt_{Y_{x}} |\psi(x, y)|^p dy $, in $\overline\Omega$ and  the limit as $\ve \to 0$ in  \eqref{Estim_Int_LocPerApprox} provide   convergence \eqref{LocPeriodConver} for $\psi \in C(\overline\Omega; C_{\text{per}}(Y_x))$. 

To  prove  \eqref{LocPeriodConver} for  $\psi\in L^p(\Omega; C_{\text{per}}(Y_x))$  we consider an approximation of  $\psi $ by a sequence 
$\{\psi_m \}\subset C(\overline\Omega; C_{\text{per}}(Y_x))$ such that   $\psi_m \to \psi$  in $L^p(\Omega; C_{\text{per}}(Y_x))$.  Using  
\begin{equation*}
\begin{array}{l}
   \left|\tilde \psi\Big( x,  \frac {D^{-1}_{x_n^\ve} x} \ve\Big)\right|^p-\left|\tilde \psi_m\Big(x,  \frac  {D^{-1}_{x_n^\ve} x} \ve\Big) \right|^p  =
 \left|\tilde \psi\Big( x,  \frac {D^{-1}_x x} \ve\Big)\right|^p-\left|\tilde \psi_m\Big(x,   \frac {D^{-1}_x x} \ve\Big) \right|^p\\
+\left|\tilde \psi\Big( x,  \frac {D^{-1}_{x_n^\ve} x} \ve\Big)\right|^p-\left|\tilde \psi\Big(x,   \frac {D^{-1}_x x} \ve\Big) \right|^p
+ \left|\tilde \psi_m\Big(x,   \frac {D^{-1}_x x} \ve\Big) \right|^p-\left|\tilde \psi_m\Big( x,  \frac {D^{-1}_{x_n^\ve} x} \ve\Big)\right|^p
\end{array}
\end{equation*}
we obtain the following estimate
\begin{eqnarray}\label{EstimApproxCont}
 && \int_{\Omega}  
\left|\left|\mathcal L^\ve \psi( x)\right|^p-\left|\mathcal L^\ve \psi_m (x) \right|^p \right| d x  
 \leq 
\int_{\Omega} \sup\limits_{y\in Y_x} 
\left |\, | \psi(x,   y)|^p-| \psi_m(x, y) |^p \right |d x \nonumber\\
&&\hspace{1.0 cm} \qquad +
\int_{\Omega} \sum\limits_{n=1}^{N_\ve} \sup\limits_{\tilde y \in Y} 
\left|\left|\tilde \psi\left( x,  \tilde y \right)\right|^p-\left|\tilde \psi\left(x,   D^{-1}_xD_{x_n^\ve}\tilde y \right) \right|^p \right| 
\chi_{\Omega_n^\ve}(x)d x\\
&& \hspace{1.0 cm} \qquad +
\int_{\Omega} \sum\limits_{n=1}^{N_\ve} 
\sup\limits_{\tilde y \in Y} 
\left|\left|\tilde \psi_m\left( x,   \tilde y \right)\right|^p-\left|\tilde \psi_m\left(x,   D^{-1}_x D_{x_n^\ve}\tilde y \right) \right|^p \right|\chi_{\Omega_n^\ve}(x)d x.\nonumber
\end{eqnarray}
 The inequality $||\psi|^p-|\psi_m|^p| \leq p ( |\psi|^{p-1}+|\psi_m|^{p-1})|\psi-\psi_m|$, see \cite{Alt},  together with the H\"older  inequality and 
 the boundedness of $\psi_m$ and $\psi$ in $L^p(\Omega; C_{\text{per}}(Y_x))$ implies  
  \begin{equation*}
  \int_{\Omega} \sup\limits_{y\in Y_x} 
\left |\, | \psi |^p-| \psi_m |^p \right |d x \leq 
C \Big(\int_{\Omega} \sup\limits_{y\in Y_x}  |\psi-\psi_m|^p dx \Big)^{\frac 1p}.
  \end{equation*}  
The assumptions on $D$ and  $\Omega^\ve_n$ ensure   $|\tilde y - D^{-1}_x D_{x_n^\ve}\tilde y|\leq \delta_2(\ve)$ for $x \in \Omega^\ve_n$, $\tilde y\in Y$ and  $n=1,\ldots, N_\ve$, where $\delta_2(\ve) \to 0$ as $\ve \to 0$.
Thus, using   the continuity of  $\psi$ and $\psi_m$ with respect to $y$, the  convergence of $\psi_m$,
 and taking in      \eqref{EstimApproxCont} the limit  as $\ve \to 0$ and then as $m\to \infty$  we conclude
\begin{equation*}
  \lim\limits_{m\to \infty}\lim\limits_{\ve \to 0}  \int_{\Omega}\left(
  \left|\mathcal L^\ve \psi (x)\right|^p-\left|\mathcal L^\ve\psi_m (x) \right|^p \right)d x  =0.
\end{equation*}  
Applying  now   the calculations from above  to $\psi_m\in C(\overline\Omega; C_{\text{per}}(Y_x))$ for $m \in \mathbb N$ and considering the strong convergence of $\psi_m$ we obtain for    $\psi\in L^p(\Omega; C_{\text{per}}(Y_x))$
\begin{eqnarray*}
  \lim\limits_{\ve \to 0}\int_{\Omega} \left|\mathcal L^\ve \psi (x)\right|^p dx &=& 
\lim\limits_{m\to \infty}\lim\limits_{\ve \to 0} \int_{\Omega} \left|\mathcal L^\ve\psi_m (x) \right|^p d x
\\
&=&\lim\limits_{m\to \infty} \int_\Omega \, \ddashinttt_{Y_x} |\psi_m(x, y)|^p dy\,  dx
= \int_\Omega \, \ddashinttt_{Y_x} |\psi(x, y)|^p dy\,  dx.
 \end{eqnarray*}  
 
The  proof for  \eqref{LocPeriodConver_L1} follows the same lines as for \eqref{LocPeriodConver}.  First we consider  $\psi \in C(\overline\Omega; C_{\text{per}}(Y_{x}))$ and   write  the integral on the left hand side of \eqref{LocPeriodConver_L1} in the form 
\begin{eqnarray*}
  \int_{\Omega} \mathcal L^\ve \psi(x)  dx 
&  =& \sum\limits_{n=1}^{\tilde N_\ve}  \sum\limits_{i=1}^{\tilde I_n^\ve} \int_{\ve Y_{x_n^\ve}^i} 
\tilde \psi\Big(x_n^\ve, \frac {D^{-1}_{x_n^\ve}x}\ve\Big) +
\Big[\tilde \psi\Big(x,  \frac{D^{-1}_{x_n^\ve}x} \ve\Big)-\tilde \psi\Big(x_n^\ve,  \frac {D^{-1}_{x_n^\ve}x} \ve\Big)  \Big] dx\\
&+&
 \sum\limits_{n=1}^{\tilde N_\ve} \int_{\mathcal M_n^\ve} \tilde\psi\Big(x, \frac  {D^{-1}_{x_n^\ve}x}\ve\Big) dx
+  \int_{\mathcal K^\ve}  \mathcal L^\ve \psi(x)  dx.
\end{eqnarray*}  
Using  estimates  \eqref{Estim_Contin}, \eqref{Estim_rest} and  \eqref{estimM_n_ve} with $p=1$ we can  conclude that 
\begin{eqnarray}\label{Estim_Int_LocPerApprox_L1} 
 \int_{\Omega} \mathcal L^\ve \psi(x) dx 
&=& \sum\limits_{n=1}^{\tilde N_\ve} |\Omega_n^\ve|\,  \ddashinttt_{Y_{x_n^\ve}} \psi(x_n^\ve, y) dy + \delta(\ve),\nonumber
\end{eqnarray}
where $\delta(\ve) \to 0$ as $\ve\to 0$. 
Thus, the continuity of  $F(x)= \ddashinttt_{Y_{x}} \psi(x, y) dy $ in $\overline\Omega$ and  the limit as $\ve \to 0$ 
 provide   convergence \eqref{LocPeriodConver_L1} for  $\psi \in C(\overline\Omega; C_{\text{per}}(Y_{x}))$. 
To show \eqref{LocPeriodConver_L1} for $\psi \in L^1(\Omega; C_{\text{per}}(Y_x))$ we  approximate $\psi$ by $\{\psi_m\} \subset C(\overline\Omega; C_{\text{per}}(Y_x))$. Then  similar calculations as in the proof of \eqref{LocPeriodConver}   ensure the convergence \eqref{LocPeriodConver_L1}.

In the same  way as above we obtain the equality 
\begin{eqnarray}\label{ConverY_x2}
\int_{\Omega}  \left|\mathcal L^\ve_0 \psi (x)\right|^p dx &=& \sum\limits_{n=1}^{\tilde N_\ve}  \tilde I_n^\ve \ve^{d}   |\det D_{x_n^\ve}| \int_{ Y } |\tilde \psi(x_n^\ve, \tilde y)|^p d\tilde y\quad 
 \\ && +\sum\limits_{n=1}^{\tilde N_\ve}\int_{\mathcal M_n^\ve}\Big|
\tilde \psi\Big(x_n^\ve, \frac{D^{-1}_{x_n^\ve}{x}}\ve \Big)\Big|^p dx +
\int_{\mathcal K^\ve} \left|\mathcal L^\ve_0\psi(x)\right|^p  dx.\nonumber
\end{eqnarray}
The second and  third integrals can be estimated by
\begin{eqnarray*}
 \sum_{n=1}^{\tilde N_\ve}  \sum_{i=\tilde I_n^\ve+1}^{I^\ve_n} \ve^d |\det D_{x_n^\ve}|\int_{Y} |\tilde \psi(x_n^\ve, \tilde y)|^p d\tilde y 
&\leq & C \ve^{1-r} \sup\limits_{x\in\Omega}  \int_{Y_x} |\psi(x, y)|^p d y , \\ 
\sum_{n=\tilde N_\ve+1}^{N_\ve} I^\ve_n\ve^d  |\det D_{x_n^\ve}|
\int_{Y} |\tilde \psi(x_n^\ve, \tilde y)|^p d\tilde y  &\leq & C \ve^{r} \sup\limits_{x\in\Omega}  \int_{Y_x} | \psi(x,  y)|^p d y.
\end{eqnarray*}
Then, using    \eqref{estimM_n_ve} and passing in  \eqref{ConverY_x2} to the limit as $\ve \to 0$
yield  convergence \eqref{EstimOscil2}.

Similar  arguments  imply also the convergence \eqref{EstimOscil2_L1}. 

Applying convergence  \eqref{LocPeriodConver}  to $\nabla_y \psi \in L^p(\overline\Omega; C_{\text{per}}(Y_x)^d)$  gives
\begin{eqnarray*}
\lim\limits_{\ve \to 0}  \int_{\Omega}
\left|\mathcal L^\ve \nabla_y \psi(x)\right|^p dx &=& \lim\limits_{\ve \to 0} \int_\Omega\,\sum\limits_{n=1}^{N_\ve}  
\Big|D_{x_n^\ve}^{-T}\nabla_{\tilde y} \tilde \psi\Big(x, \frac {D_{x_n^\ve}^{-1}x} \ve\Big)\Big|^p \chi_{\Omega_n^\ve} dx \nonumber\\
&=&\int_\Omega  \ddashinttt_{Y_x} |\nabla_{y} \psi (x,y)|^p dy  dx
=\int_\Omega \ddashinttt_{Y} \big| D^{-T}_x\nabla_{\tilde y} \tilde \psi (x, \tilde y)\big|^p d\tilde y    dx,
\end{eqnarray*}
where $D^{-T}_{x}$ is the transpose of the matrix $D^{-1}_{x}$.
\hfill \end{proof}

To proof the Lemma for an arbitrary chosen  $\tilde x_n^\ve\in \Omega_n^\ve$ we shall consider the shifted covering $\Omega_n^\ve \subset \tilde x_n^\ve+ \cup_{i=1}^{I_{n}^\ve} \ve  Y_{x_n^\ve}^i$ with the same properties as above. Then for $x\in \Omega_n^\ve$ we have $x-\tilde x_n^\ve \in \ve  Y_{x_n^\ve}^i$ for some $1\leq i \leq I_n^\ve$ and, applying the change of variables $D^{-1}_{x_n^\ve}(x-\tilde x_n^\ve)/\ve= \tilde y$, we can conduct the same calculations as  for $\tilde x_n^\ve=D_{x_n^\ve} \ve k$, $k\in \mathbb Z^d$.

We consider $\tilde f\in L^p_{\text{per}}(Y)$ for  $1< p \leq \infty$ and define  $f(x,y):=\tilde f(D^{-1}_x y)$ for $x \in \Omega$ and a.a. $y\in Y_x$. Applying similar calculations as in Lemma \ref{converg_ost}  we can show  the  boundedness in $L^p$ of  $\{\mathcal L^\ve f\}$, which due  to the structure of $f$ coincides with
 $\{\mathcal L^\ve_0 f\}$,
 \[
\|\mathcal L^\ve f\|^p_{L^p(\Omega)} \leq \sum\limits_{n=1}^{N_\ve} I^\ve_n \ve^d  |\det  D_{x_n^\ve}| 
\|\tilde f(\tilde y) \|^p_{L^p(Y)}\leq C \| \tilde f \|^p_{L^p(Y)},
\]
and the convergence 
\begin{equation*}
\lim\limits_{\ve \to 0}\int_\Omega \mathcal L^\ve f(x) dx=\lim\limits_{\ve \to 0}\sum\limits_{n=1}^{\tilde N_\ve}  \tilde I_n^\ve \ve^d
|\det  D_{x_n^\ve}| \int_{Y} \tilde f(\tilde y) d\tilde y=\int_{\Omega} \ddashinttt_{Y_x}  f(x, y) d y dx.
 \end{equation*}
Thus, in the same manner as for periodic functions we obtain   as $\ve \to 0$
\begin{eqnarray*}
\mathcal L^\ve f(x) \rightharpoonup  \ddashinttt_{Y_x}f(x,y) dy \quad   \text{ weakly \,  in }  \,  L^p(\Omega) \text{ for } p>1, \quad 
 \text{ weakly}\ast \text{ in }  \,  L^\infty(\Omega).
\end{eqnarray*}

{\it Remark. }  As we can see  from the proofs, all convergence results are independent of the choice of  the points
$x_n^\ve, \tilde x_n^\ve \in \Omega_n^\ve$ for $n=1, \ldots, N_\ve$.

Now we  prove the compactness theorem for locally-periodic two-scale convergence. 
\begin{proof} ({\it Theorem \ref{L2_lp_covregence}} )
We  shall apply similar ideas as for  the two-scale convergence with periodic test functions, \cite{Allaire, Lukkassen}.

For   $\psi \in L^2(\Omega; C_{\text{per}}(Y_x))$ we consider a  functional $\mu^\ve$ given by
\[
\mu^\ve(\psi)= \int_{\Omega} u^\ve(x) \mathcal L^\ve    \psi(x)  dx.
\]
Using the Cauchy-Schwarz inequality and the boundedness of $\{u^\ve \} $ in $L^2(\Omega)$,
we obtain that    $\mu^\ve$ is a bounded linear functional  in $(L^2(\Omega;  C_{\text{per}}(Y_x)))^\prime$:
\begin{equation*}
|\mu^\ve(\psi)| \leq \|u^\ve\|_{L^2(\Omega)} \| \mathcal L^\ve\psi \|_{L^2(\Omega)} 
\leq C ||\psi||_{L^2(\Omega; C_{\text{per}}(Y_x))}.
\end{equation*}
 Since $L^2(\Omega;  C_{\text{per}}(Y_x))$ is separable,  there exists $\mu_0 \in (L^2(\Omega;  C_{\text{per}}(Y_x)))^\prime$ 
such that, up to a subsequence,  $\mu^\ve \stackrel{\ast }{\rightharpoonup} \mu_0$ in $(L^2(\Omega;  C_{\text{per}}(Y_x)))^\prime$.
Using convergence  \eqref{LocPeriodConver}  and assumptions on $D$ yields 
$$
|\mu_0(\psi)|= |\lim\limits_{\ve\to 0} \mu^\ve(\psi)|\leq C_1\lim\limits_{\ve\to 0}
 \|\mathcal L^\ve\psi \|_{L^2(\Omega)}    \leq C_2 ||\psi ||_{L^2(\Omega; L^2(Y_x))}.
$$
Thus, $\mu_0$ is a bounded linear functional on the Hilbert space $L^2(\Omega; L^2(Y_x))$. The definition of  $L^2(\Omega; L^2(Y_x))$, the density of   
$L^2(\Omega; C_{\text{per}}(Y_x))$ in $L^2(\Omega; L^2(Y_x))$,   and the Riesz representation theorem  imply the existence of 
  $\tilde u\in L^2(\Omega;L^2(Y_x))$ such that 
$$
\mu_0(\psi)= \int_\Omega  \int_{Y_x} \tilde u(x,y) \psi(x,y) dy dx=\int_\Omega  \ddashinttt_{Y_x} u(x,y) \psi(x,y) dy dx,
$$
with $u = \tilde u |Y_x|$ and $u\in L^2(\Omega;L^2(Y_x))$. Therefore,  there exists a  subsequence of $\{u^\ve\}$  that converges locally-periodic two-scale to $u \in  L^2(\Omega;L^2(Y_x))$. \hfill
\end{proof}

Similar as for the two-scale convergence, see \cite{Lukkassen}, we can show that it is sufficient to consider  more regular  test functions in the definition of locally-periodic two-scale convergence, assuming that the sequence is bounded in $L^2(\Omega)$. 
\begin{proposition}\label{ConverSmoothL21}
Let $\{u^\ve\}$ be a bounded sequence  in $L^2(\Omega)$,  such that 
\begin{equation}\label{ConverSmooth}
\lim\limits_{\ve \to 0}\int_{\Omega} u^\ve(x)
\mathcal L^\ve \phi(x) dx  =
\int_\Omega   \ddashinttt_{Y_x}  u(x,y) \phi(x, y)  dy dx
\end{equation}
for every $\phi \in W^{1,\infty}_0(\Omega; C^\infty_{\text{per}}(Y_x))$. 
Then $\{u^\ve\}$ converges l-t-s  to $u$.
\end{proposition}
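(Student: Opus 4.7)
The plan is a standard density/uniqueness argument built on the compactness Theorem \ref{L2_lp_covregence} together with the density of smooth test functions in $L^2(\Omega; L^2(Y_x))$.

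First I would invoke compactness: since $\{u^\ve\}$ is bounded in $L^2(\Omega)$, Theorem \ref{L2_lp_covregence} produces a subsequence (not relabelled) and some $\bar u \in L^2(\Omega; L^2(Y_x))$ such that $u^\ve \to \bar u$ locally-periodic two-scale, i.e.\ for every $\psi \in L^2(\Omega; C_{\text{per}}(Y_x))$,
\[
\lim_{\ve\to 0}\int_\Omega u^\ve(x)\,\mathcal L^\ve \psi(x)\, dx = \int_\Omega \ddashinttt_{Y_x} \bar u(x,y)\,\psi(x,y)\, dy\, dx .
\]
Applying the hypothesis \eqref{ConverSmooth} to every $\phi \in W^{1,\infty}_0(\Omega; C^\infty_{\text{per}}(Y_x))$ and subtracting then yields
\[
\int_\Omega \ddashinttt_{Y_x} (\bar u(x,y) - u(x,y))\,\phi(x,y)\, dy\, dx = 0 \qquad \text{for all such } \phi.
\]

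The crux of the argument, and the step I expect to be the main obstacle, is the density of $W^{1,\infty}_0(\Omega; C^\infty_{\text{per}}(Y_x))$ in $L^2(\Omega; L^2(Y_x))$. To address it I would move to the flat picture via the identification $\phi(x,y) = \tilde\phi(x, D_x^{-1} y)$ with $\tilde\phi$ being $Y$-periodic in $\tilde y$. Under this correspondence, with the Jacobian $|\det D_x|$ bounded uniformly above and below by assumption on $D$, the space $L^2(\Omega; L^2(Y_x))$ is isomorphic to $L^2(\Omega; L^2(Y))$, in which tensor products $\alpha(x)\beta(\tilde y)$ with $\alpha \in C^\infty_0(\Omega)$ and $\beta \in C^\infty_{\text{per}}(Y)$ are dense. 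Because $D^{-1}$ is Lipschitz, the pull-back $x \mapsto \alpha(x)\beta(D_x^{-1} y)$ still lies in $W^{1,\infty}_0(\Omega; C^\infty_{\text{per}}(Y_x))$, so the admissible class of smooth test functions is dense, and the vanishing integral identity above forces $\bar u = u$ almost everywhere.

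Finally, since every subsequence of $\{u^\ve\}$ has a further subsequence l-t-s convergent to the \emph{same} limit $u$, a standard subsequence principle promotes the subsequential convergence to l-t-s convergence of the whole sequence. The one technical point worth care is verifying in the density step that the Lipschitz change of variables $y = D_x \tilde y$ preserves the joint regularity required (smooth and periodic in $y$, Lipschitz and compactly supported in $x$) to produce genuine elements of $W^{1,\infty}_0(\Omega; C^\infty_{\text{per}}(Y_x))$; everything else is routine.
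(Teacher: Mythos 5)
Your argument is correct, but it follows a different route than the paper. You extract an l-t-s convergent subsequence via Theorem~\ref{L2_lp_covregence}, identify its limit with $u$ by testing against the smooth class and using density of such test functions in $L^2(\Omega;L^2(Y_x))$ (correctly handled through the flat correspondence $\phi(x,y)=\tilde\phi(x,D_x^{-1}y)$ and the uniform bounds on $|\det D_x|$, which turn the pairing $\int_\Omega\ddashinttt_{Y_x}(\bar u-u)\phi\,dy\,dx$ into an ordinary integral over $\Omega\times Y$ where tensor products $\alpha(x)\beta(\tilde y)$ suffice), and then promote subsequential convergence to the full sequence by the standard sub-subsequence principle applied to each scalar sequence $\int_\Omega u^\ve\mathcal L^\ve\psi\,dx$. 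The paper argues more directly: for an arbitrary $\psi\in L^2(\Omega;C_{\text{per}}(Y_x))$ it takes approximations $\phi_m\in W^{1,\infty}_0(\Omega;C^\infty_{\text{per}}(Y_x))$ with $\phi_m\to\psi$ in $L^2(\Omega;C_{\text{per}}(Y_x))$, splits $\int_\Omega u^\ve\mathcal L^\ve\psi\,dx$ into $\int_\Omega u^\ve\mathcal L^\ve\phi_m\,dx$ plus an error term $\int_\Omega u^\ve\mathcal L^\ve(\psi-\phi_m)\,dx$, and kills the error uniformly in $\ve$ using the $L^2$-boundedness of $\{u^\ve\}$ together with the \eqref{EstimApproxCont}-type estimates of Lemma~\ref{converg_ost}; this gives convergence of the whole sequence without any compactness or subsequence extraction. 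What each approach buys: yours is softer and shorter in spirit, since Theorem~\ref{L2_lp_covregence} already packages the needed uniformity, but it needs the density of the smooth class in the larger space $L^2(\Omega;L^2(Y_x))$ to pin down the limit and an extra diagonal/subsequence step; the paper's version is more quantitative, works entirely at the level of the test-function space $L^2(\Omega;C_{\text{per}}(Y_x))$, and yields the full-sequence statement in one pass. Both are valid proofs of the proposition.
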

\begin{proof}
We consider  $\psi \in L^2(\Omega; C_{\text{per}}(Y_x))$ and  $\{\phi_m \}\subset W^{1,\infty}_0(\Omega; C^\infty_{\text{per}}(Y_x))$
such that $\phi_m \to \psi$ in $ L^2(\Omega; C_{\text{per}}(Y_x))$ as $m \to \infty$.
Then we can write
\begin{equation*}
\lim\limits_{\ve \to 0}\int_{\Omega} u^\ve(x)
\mathcal L^\ve\psi (x) dx=
 \lim\limits_{m\to \infty}
\lim\limits_{\ve \to 0}\int_{\Omega} \left[u^\ve(x)  \mathcal L^\ve \phi_m(x) + u^\ve(x) (\mathcal L^\ve \psi- \mathcal L^\ve\phi_m)\right] dx.   \end{equation*}

 Assumed convergence \eqref{ConverSmooth}  and the   convergence of $\phi_m$  to $\psi$ ensure 
   \begin{equation*}
\lim\limits_{m\to \infty}\lim\limits_{\ve \to 0} \int_{\Omega} u^\ve(x)  \mathcal L^\ve \phi_m(x) dx=
\lim\limits_{m\to \infty}\int_\Omega   \ddashinttt_{Y_x}  u \, \phi_m \, dy dx
=\int_\Omega   \ddashinttt_{Y_x}  u(x,y) \psi(x, y)  dy dx.
  \end{equation*}
The boundedness of $\{u^\ve\}$ in $L^2(\Omega)$,  estimates  similar  to   \eqref{EstimApproxCont} in
 the proof of   Lemma~\ref{converg_ost},  the continuity of $\psi$ and $\phi_m$ with respect 
to  the second variable, the  convergence of $\phi_m$, and the regularity of $D$ imply  that 
the second term on the right hand side converges to zero as $\ve \to 0$ and   $m\to \infty$.  Thus,  due to the arbitrary choice  of  $\psi \in L^2(\Omega; C_{\text{per}}(Y_x))$, we  conclude  that  $u^\ve \to u$ in the locally-periodic two-scale sense as $\ve \to 0$. 
\hfill  \end{proof}
 
 If we  assume more regularity on the transformation matrix $D$,  the result in Proposition~\ref{ConverSmoothL21} holds also
for more regular, with respect to $x$,  test functions $\phi$. 

In the following we prove a technical lemma on the strong $(H^{1}(\Omega))^\prime-$convergence, which will be used in the proof of 
Theorem~\ref{compactness_theorem}.
\begin{lemma}\label{H_1_conv}
 For   $\Psi \in W^{1,\infty}_0(\Omega; C^1_{\text{per}}(Y_x)^d)$, corresponding  $\tilde \Psi \in W^{1,\infty}_0(\Omega; C^1_{\text{per}}(Y)^d)$, and  $\phi_{\Omega_n^\ve} \in C^\infty_0(\Omega_n^\ve)$ such that 
\begin{equation}\label{estim_grad_phi}
||\nabla^m \phi_{\Omega^\ve_n}||_{L^\infty(\mathbb R^d)} \leq C\ve^{-m \rho}  \qquad \text{with } \, \, 0<\rho<1,
\end{equation}
 for  $n=1,\ldots,  N_\ve$,  holds
\begin{eqnarray}\label{convergence_H_1}
 \sum\limits_{n=1}^{N_\ve}  \Big(\tilde\Psi\Big(\cdot, D_{x_n^\ve}^{-1}\frac{ \cdot} \ve\Big)
-  \ddashinttt_{Y_x} \Psi(\cdot, y) d y \Big) \nabla \phi_{\Omega_n^\ve}  \to 0  \quad \text{ in } \, \, (H^{1}(\Omega))^\prime.
\end{eqnarray}
In particular,   for $x_n^\ve\in \Omega_n^\ve$ and  $1\leq  n \leq N_\ve$ we have 
\begin{eqnarray}\label{convergence_H_1_2}
 \sum\limits_{n=1}^{N_\ve}  \Big(\Psi\Big(x_n^\ve, \frac{ \cdot} \ve\Big)
-  \ddashinttt_{Y_{x_n^\ve}} \Psi(x_n^\ve, y) d y \Big) \nabla \phi_{\Omega_n^\ve}    \to 0  \quad \text{ in } \, \, (H^{1}(\Omega))^\prime.
\end{eqnarray}
\end{lemma}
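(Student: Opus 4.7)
The plan is a duality argument: for each $\varphi\in H^1(\Omega)$, I will show that $\langle f^\ve,\varphi\rangle\to 0$ uniformly in $\|\varphi\|_{H^1(\Omega)}$, where $f^\ve$ denotes the sum on the left-hand side of \eqref{convergence_H_1}. The principal tool is a cell-problem corrector adapted to the transformation $D_{x_n^\ve}$. For each $j\in\{1,\ldots,d\}$ and each cube $\Omega_n^\ve$, I let $\tilde W_{j,n}(x,\tilde y)\in H^1_{\text{per}}(Y)/\mathbb R$ be the unique mean-zero periodic solution of
\begin{equation*}
-\nabla_{\tilde y}\cdot\Big(D_{x_n^\ve}^{-1}D_{x_n^\ve}^{-T}\nabla_{\tilde y}\tilde W_{j,n}(x,\tilde y)\Big)=\tilde\Psi_j(x,\tilde y)-\int_Y\tilde\Psi_j(x,\tilde y')\,d\tilde y',
\end{equation*}
which exists by Lax--Milgram (the matrix $D_{x_n^\ve}^{-1}D_{x_n^\ve}^{-T}$ is uniformly positive definite by the assumptions on $D$) and is bounded in $L^\infty(\Omega\times Y)$ uniformly in $x,n,\ve$ by elliptic regularity. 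Setting $V_{j,n}^k(x,\tilde y):=-(D_{x_n^\ve}^{-T}\nabla_{\tilde y}\tilde W_{j,n})_k$, the vector $V_{j,n}$ is $Y$-periodic and mean zero in $\tilde y$, and satisfies $\sum_{k,l}(D_{x_n^\ve}^{-1})_{lk}\partial_{\tilde y_l}V_{j,n}^k=\tilde\Psi_j(x,\tilde y)-\int_Y\tilde\Psi_j(x,\tilde y')\,d\tilde y'$.

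Applying the chain rule to $G_{j,n}^k(x):=V_{j,n}^k(x,D_{x_n^\ve}^{-1}x/\ve)$ together with this cell-problem identity yields the pointwise relation
\begin{equation*}
\tilde\Psi_j(x,D_{x_n^\ve}^{-1}x/\ve)-\int_Y\tilde\Psi_j(x,\tilde y)\,d\tilde y=\ve\sum_k\partial_{x_k}G_{j,n}^k(x)-\ve\sum_k(\partial_{x_k}V_{j,n}^k)(x,D_{x_n^\ve}^{-1}x/\ve).
\end{equation*}
I then multiply by $\partial_{x_j}\phi_{\Omega_n^\ve}$, sum over $n,j$, pair with $\varphi\in H^1(\Omega)$, and integrate by parts using $\phi_{\Omega_n^\ve}\in C^\infty_0(\Omega_n^\ve)$ (so boundary terms vanish). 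Thanks to the disjointness of the supports $\{\Omega_n^\ve\}$ and the bounds $\|\nabla^m\phi_{\Omega_n^\ve}\|_\infty\le C\ve^{-m\rho}$, this expresses $\langle f^\ve,\varphi\rangle$ as a finite sum of terms of size $O(\ve^{1-\rho})\|\varphi\|_{H^1(\Omega)}$ (either $L^2$-bounded functions of small norm, or distributional divergences of $L^2$-bounded vector fields of small norm), plus a single residual involving $\nabla^2\phi_{\Omega_n^\ve}$ whose pointwise size is $O(\ve^{1-2\rho})$.

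For $\rho<1/2$ this residual is already $o(1)$ in $L^2$ and the proof is complete. For $\rho\in[1/2,1)$ the hard part is to iterate: since $V_{j,n}^k(x,\tilde y)$ is itself periodic and mean zero in $\tilde y$, applying the same construction with $V_{j,n}^k$ in place of the original oscillation gives a representation $V_{j,n}^k(x,D_{x_n^\ve}^{-1}x/\ve)=\ve\,\text{div}_x(\text{bounded})+O(\ve)_{L^\infty}$. Substituting this into the bad residual trades one factor $\ve$ for one additional derivative on $\phi_{\Omega_n^\ve}$, improving the residual bound by a factor $\ve^{1-\rho}$. After $K$ iterations with $K>\rho/(1-\rho)$ (finite since $\rho<1$) every remaining term has $(H^1(\Omega))'$-norm $o(1)$, which proves \eqref{convergence_H_1}; the regularity required at each iteration level is supplied automatically by elliptic theory for the iterated periodic cell problems, so the initial hypothesis $\tilde\Psi\in C^1_{\text{per}}(Y)$ suffices. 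Identity \eqref{convergence_H_1_2} follows by the identical argument with $\tilde\Psi_j(x_n^\ve,\tilde y)$ in place of $\tilde\Psi_j(x,\tilde y)$: the slow variable is then frozen on each $\Omega_n^\ve$, which eliminates the $(\partial_{x_k}V_{j,n}^k)$-type residual and makes the estimates slightly cleaner. The principal technical obstacle is precisely the $O(\ve^{1-2\rho})$ residual generated by $\nabla^2\phi_{\Omega_n^\ve}$, which prevents a single-level corrector from succeeding once $\rho\ge 1/2$ and forces the iteration described above, together with the bookkeeping of the piecewise-constant dependence on $n$ through the cell-problem coefficients $D_{x_n^\ve}$.
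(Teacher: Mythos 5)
Your proof is correct and takes essentially the same route as the paper: introduce a periodic cell-problem corrector, use a chain-rule identity to extract a factor of $\ve$ (leaving an $(H^1)'$-small divergence term, an $L^2$-small lower-order term, and an $O(\ve^{1-2\rho})$ residual driven by $\nabla^2\phi_{\Omega_n^\ve}$), and iterate this construction until $m(1-\rho)>1$ to handle $\rho\ge 1/2$, with \eqref{convergence_H_1_2} obtained by freezing $x=x_n^\ve$ so that the $\nabla_x$-type residual disappears. The only cosmetic difference is the choice of cell operator: the paper solves $\Delta_{\tilde y}h_1=\tilde\Psi-\ddashinttt_{Y_x}\Psi\,dy$ once on $Y$ (with $D_{x_n^\ve}$ entering only through the chain rule at $\tilde y=D_{x_n^\ve}^{-1}x/\ve$), whereas you solve the $n$-dependent anisotropic problem with coefficient $D^{-1}_{x_n^\ve}D^{-T}_{x_n^\ve}$, and the two formulations yield equivalent identities and estimates.
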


\begin{proof}
For  $ \Psi \in  W^{1,\infty}_0(\Omega; C^1_{\text{per}}(Y_x)^d) $ there exists a unique $h_1 \in W^{1,\infty}_0(\Omega; C^2_{\text{per}}(Y)^d)$ with zero $Y$-mean value  such that 
 \begin{equation}\label{Eq_h}
 \Delta_{\tilde y} h_1(x,\tilde y)= \tilde \Psi(x, \tilde y)- \ddashinttt_{Y_x} \Psi(x,    y) d  y. 
 \end{equation}
Considering   \eqref{Eq_h} with   $\tilde y=D^{-1}_{x_n^\ve} x/\ve$  we obtain  for a.a. $x\in \Omega_n^\ve\cap \Omega$ and  $1\leq n \leq N_\ve$
\begin{eqnarray}\label{H1_sec1}
\tilde\Psi\Big(x, \frac{ D_{x_n^\ve}^{-1}x} \ve\Big)- \ddashinttt_{Y_x}\Psi(x, y) dy
&=& \ve \nabla\cdot\Big( D_{x_n^\ve}\cdot\nabla_{\tilde y}  h_1\Big(x, \frac{D^{-1}_{x_n^\ve} x}\ve\Big) \Big) \nonumber \\
&& -\ve
\nabla_x\cdot\Big( D_{x_n^\ve}\cdot\nabla_{\tilde y}  h_1\Big(x, \frac{D^{-1}_{x_n^\ve} x}\ve\Big)\Big).
\end{eqnarray}
Similarly as in   \cite{Briane3}, we define $F_1(x,\tilde y)=\nabla_{\tilde y}  h_1(x,\tilde y)$ with 
$F_1 \in W^{1,\infty}_0(\Omega; C^1_{\text{per}}(Y)^{d\times d})$ and 
  $F^{n}_1(x)=  F_1(x, D^{-1}_{x_n^\ve} x/\ve)$ for $x\in \Omega^\ve_n\cap \Omega$.  Applying \eqref{H1_sec1}, we have for a.a. $x\in \Omega$
\begin{eqnarray}\label{H1_Conver22}
 && \sum\limits_{n=1}^{N_\ve} \Big[ \tilde \Psi\Big(x, \frac {D_{x_n^\ve}^{-1} x} \ve\Big) - \ddashinttt_{Y_x}\Psi dy\Big]
\nabla \phi_{\Omega_n^\ve} =\sum\limits_{n=1}^{N_\ve} \ve \left[\nabla  \cdot \left(D_{x_n^\ve}F^n_1 \right) - 
 \nabla_x  \cdot\left( D_{x_n^\ve}F^n_1 \right) \right]\nabla \phi_{\Omega_n^\ve} \nonumber \\
 &&=\sum\limits_{n=1}^{N_\ve}\Big[ \nabla\cdot \left(\ve (D_{x_n^\ve}  F^{n}_1)\nabla\phi_{\Omega_n^\ve} \right) 
 - \ve   \nabla_x \cdot \left(D_{x_n^\ve}F^{n}_1\right)  \nabla \phi_{\Omega_n^\ve} - \ve D_{x_n^\ve} F_1^{n} : \nabla^2 \phi_{\Omega_n^\ve}\Big].
\end{eqnarray}

The  boundedness of $\sum_{n=1}^{N_\ve}D_{x_n^\ve}F^{n}_1\chi_{\Omega_n^\ve}$   and 
$\sum_{n=1}^{N_\ve}\nabla_x \cdot \big(D_{x_n^\ve}F^{n}_1\big)\chi_{\Omega_n^\ve}$ in  $L^2(\Omega)$, assured by the regularity of  $F_1$ and $D$,  and assumption \eqref{estim_grad_phi} imply 
\begin{eqnarray*}
&&\sum\limits_{n=1}^{N_\ve} \nabla\cdot \left(\ve (D_{x_n^\ve}  F^{n}_1)\nabla\phi_{\Omega_n^\ve} \right)  \to 0
\text{ in  } (H^{1}(\Omega))^\prime, \\
&&
 \sum\limits_{n=1}^{N_\ve}  \ve  \nabla_x \cdot \big(D_{x_n^\ve}F^{n}_1\big) \nabla \phi_{\Omega_n^\ve} \to 0 \, \text{  in } \, L^2(\Omega).
\end{eqnarray*}
Then the last convergences  together with  \eqref{H1_Conver22} result in
\begin{eqnarray}\label{H_1conv_1}
 \sum\limits_{n=1}^{N_\ve} \Big(\Big[ \tilde\Psi\Big(x,\frac {D_{x_n^\ve}^{-1} x} \ve\Big) - \ddashinttt_{Y_x}\Psi dy\Big] \nabla \phi_{\Omega_n^\ve} 
 +   \ve D_{x_n^\ve}   F_1^{n}  :\nabla^2 \phi_{\Omega_n^\ve} \Big)\to 0   \text{ in }   \big(H^{1}(\Omega)\big)^\prime. \qquad 
\end{eqnarray}
If $\rho< \frac 12$, due to  \eqref{estim_grad_phi},  convergence \eqref{convergence_H_1} follows    from \eqref{H_1conv_1}.  If $\frac 12\leq \rho <1$ we have to iterate the above calculations. The  periodicity of $h_1(x,\tilde y)$ yields   $\ddashinttt_{ Y} F_1(x, \tilde y) d\tilde y =\bf 0$. Thus  there exists a unique $h_2 \in W^{1,\infty}_0(\Omega; C^2_{\text{per}}(Y)^{d\times d})$ with zero $Y$-mean value that
\begin{equation*}
\Delta_{\tilde y} h_2(x,\tilde y)= F_1(x,\tilde y). 
\end{equation*}
Defining  $F_2(x,\tilde y)= \nabla_{\tilde y}  h_2(x, \tilde y)$  for $x\in \Omega$, $\tilde y \in Y$, 
and $F^{n}_2(x)=  F_2(x, D^{-1}_{x_n^\ve} x/\ve)$  for $x \in \Omega_n^\ve\cap \Omega$, we obtain a.e. in $\Omega$
\begin{eqnarray*}
 &&\sum\limits_{n=1}^{N_\ve}  \ve D_{x_n^\ve}   F_1^{n} : \nabla^2 \phi_{\Omega_n^\ve} =
\sum\limits_{n=1}^{N_\ve} \ve^2 \nabla\cdot\Big(D_{x_n^\ve}(D_{x_n^\ve} F_2^{n}) : \nabla^2 \phi_{\Omega_n^\ve} \Big) \\
&& - \sum\limits_{n=1}^{N_\ve}  \ve^2  \nabla_x \cdot\big(D_{x_n^\ve} (D_{x_n^\ve}F_2^{n}) \big) : \nabla^2 \phi_{\Omega_n^\ve}
 -\sum\limits_{n=1}^{N_\ve}  \ve^2 D_{x_n^\ve}(D_{x_n^\ve} F_2^{n}) : \nabla^3 \phi_{\Omega_n^\ve}  .
\end{eqnarray*}
Due to \eqref{estim_grad_phi},   there exists 
$m\in \mathbb N$  that $m(1-\rho)>1$ and  $\ve^{m-1}||\nabla^m \phi_{\Omega^\ve_n}||_{L^\infty(\mathbb R^d)}   \to 0$ as $\ve\to 0$.
Reiterating  the last  calculations $m$-times    yields   convergence \eqref{convergence_H_1}.

To show \eqref{convergence_H_1_2} we consider  \eqref{Eq_h} with  $x=x_n^\ve$  and  $\tilde y=D^{-1}_{x_n^\ve} x/\ve$    for  $x\in \Omega_n^\ve\cap \Omega$ and  $1\leq n \leq N_\ve$.  Then  $\tilde \Psi(x_n^\ve, D_{x_n^\ve}^{-1} x/\ve) = \Psi(x_n^\ve, x/\ve)$ and  for  $x\in \Omega_n^\ve\cap \Omega$ we have 
\begin{eqnarray*}\label{H1_sec1_2}
\Psi\Big(x_n^\ve, \frac{ x} \ve\Big)- \ddashinttt_{Y_{x_n^\ve}}\Psi(x_n^\ve, y) dy
&=& \ve \nabla\cdot\Big( D_{x_n^\ve}\cdot\nabla_{\tilde y}  h_1\Big(x_n^\ve, \frac{D^{-1}_{x_n^\ve} x}\ve\Big) \Big).
\end{eqnarray*}
Applying  similar calculations as  in the proof of \eqref{convergence_H_1} yields convergence \eqref{convergence_H_1_2}.
\hfill \end{proof}

The convergence in Lemma~\ref{H_1_conv} is now applied in the proof of the convergence result for a bounded in $H^1$ sequence, where   $\phi_{\Omega^\ve_n}$  will be the approximations of $\chi_{\Omega_n^\ve}$ with  $0<r<\rho<1$ and $n=1,\ldots, N_\ve$.  The proof emphasises the importance of $r<1$ in  locally-periodic approximations of functions with space-dependent periodicity. 

\begin{proof} ({\it Theorem \ref{compactness_theorem}} ) The ideas of the proof are similar to those  for the  two-scale convergence with periodic test functions,  \cite{Allaire, Lukkassen}. 
\\
Since     $\{u^\ve \}$ is bounded in $L^2(\Omega)$, thanks to   Theorem \ref{L2_lp_covregence}, there exist   $u_0 \in L^2(\Omega; L^2(Y_x))$ and  a subsequence,  denoted  again by $\{u^\ve\}$,  such that $u^\ve \to u_0 $ in locally-periodic two-scale sense.
 We shall  show that $u_0$ is independent of $y$. We consider    approximations $\phi_{\O_n^\ve}\in C^\infty_0 (\Omega_n^\ve)$  of    $\chi_{\Omega_n^\ve}$  satisfying properties \eqref{ApproxCharactF}.
The boundedness of   $\{\nabla u^\ve\}$ yields
\begin{equation*}
0=\lim\limits_{\ve\to 0} \int_\Omega \ve \nabla u^\ve \mathcal L^\ve\psi   \, dx
= \lim\limits_{\ve\to 0}\Big[ \int_\Omega \ve  \nabla u^\ve (\mathcal L^\ve\psi -\mathcal L^\ve_\rho\psi)\, dx -
 \int_\Omega \ve  u^\ve \nabla (\mathcal L^\ve_\rho\psi)\, dx\Big]
\end{equation*}
for $\psi \in W^{1, \infty}_0(\Omega; C_{\text{per}}^\infty(Y_x))$. Due to the convergence of $\phi_{\Omega_n^\ve}$ stated in  \eqref{ApproxCharactF} and boundedness of $\{\nabla u^\ve\}$ in $L^2(\Omega)$, the limit  of the first integral on the right hand side is equal to zero. The   second integral    can be written as the sum of three 
\begin{eqnarray*}
 \mathcal I_1+\mathcal I_2+\mathcal I_3= && \int_\Omega   u^\ve \, \mathcal L^\ve \nabla_y  \psi \, dx  
+ \int_\Omega \sum\limits_{n=1}^{N_\ve}
u^\ve D_{x_n^\ve}^{-T} \nabla_{\tilde y} \tilde \psi \Big(x, \frac {D_{x_n^\ve}^{-1} x} \ve\Big) 
\big( \phi_{\Omega_n^\ve}- \chi_{\Omega_n^\ve} \big)  dx \\
&&+   \ve\int_\Omega \sum\limits_{n=1}^{N_\ve}u^\ve  
\Big[\tilde \psi \Big(x, \frac {D_{x_n^\ve}^{-1} x} \ve\Big)  \nabla \phi_{\Omega_n^\ve}
+ \nabla_x \tilde \psi \Big(x, \frac {D_{x_n^\ve}^{-1} x} \ve\Big)   \phi_{\Omega_n^\ve} \Big] dx.
\end{eqnarray*}
The  properties  \eqref{ApproxCharactF}  of  $\phi_{\O_n^\ve}$   imply  that
$
\lim\limits_{\ve\to 0} \mathcal I_2=0$  and $ \lim\limits_{\ve\to 0}  \mathcal I_3=0. $
Considering  the locally-periodic two-scale convergence of   $\{u^\ve\}$ in  $\mathcal I_1$ we obtain
\[
 \int_\Omega  \ddashinttt_{Y_x} u_0 \nabla_y  \psi(x, y) dy   dx=0
\]
for $\psi \in W^{1, \infty}_0(\Omega; C^\infty_{\text{per}}(Y_x))$ and can conclude  that  $u_0$ is independent of $y$, see \cite{Alt}. Since the average over $Y_x$ of $u_0$ is equal to $u$ and $u_0$ is independent of $y$, we deduce that for any subsequence the locally-periodic two-scale limit reduces  to the weak $L^2$-limit $u$. Thus the entire sequence $\{u^\ve\}$ converges to $u$ in the locally-periodic two-scale sense.  

Applying    Theorem~\ref{L2_lp_covregence}  to the bounded in  $L^2(\Omega)^d$  sequence $\{\nabla u^\ve\}$ yields the existence of  $\xi \in L^2(\Omega; L^2(Y_x)^d)$ and  of a subsequence, denoted again by  $\{\nabla u^\ve\}$,    that
\begin{eqnarray}\label{conver_lts_grad}
&& 
 \lim\limits_{\ve \to 0} \int_\Omega \nabla u^\ve(x)\,  \mathcal L^\ve \Psi(x) \, dx=
\int_\Omega  \ddashinttt_{Y_x}
\xi \, \Psi( x, y) dy   dx
\end{eqnarray}
for  $\Psi \in W^{1, \infty}_0(\Omega; C^\infty_{\text{per}}(Y_x)^d)$.
 Now we assume additionally    $\nabla_y \cdot \Psi(x,y)=0$ for $x\in \Omega$,  $y\in Y_x$ and  notice that 
 \begin{equation}\label{part_int}
 \nabla \cdot\tilde \Psi\Big(x_n^\ve, \frac {D^{-1}_{x_n^\ve} x} \ve\Big)=  \nabla \cdot \Psi\Big(x_n^\ve, \frac x \ve\Big) = 
 \frac 1 \ve \nabla_y \cdot \Psi\Big(x_n^\ve,  \frac x\ve\Big)=0.
 \end{equation}
We rewrite the integral on the left hand side of \eqref{conver_lts_grad} in the form
\begin{eqnarray*}
&&I=\int_\Omega \nabla u^\ve (\mathcal L^\ve \Psi -  \mathcal L^\ve_0 \Psi) dx+ \int_\Omega \nabla u^\ve \sum\limits_{n=1}^{N_\ve} \Big[
 \Psi\big(x_n^\ve, \frac x \ve\big) -\ddashinttt_{Y_{x_n^\ve}}\Psi(x_n^\ve, y) dy   \Big] \phi_{\Omega_n^\ve} dx \\
 &&+
  \int_\Omega \nabla u^\ve \sum\limits_{n=1}^{N_\ve} 
\Big[ \Psi\big(x_n^\ve, \frac x \ve\big) -\ddashinttt_{Y_{x_n^\ve}}\Psi(x_n^\ve,y) dy \Big]
(\chi_{\Omega_n^\ve}- \phi_{\Omega_n^\ve} ) dx  +\int_\Omega \nabla u^\ve \ddashinttt_{Y_x}\Psi(x, y)  dy dx
\\
&&+    \int_\Omega \nabla u^\ve \sum\limits_{n=1}^{N_\ve} \Big[\ddashinttt_{Y_{x_n^\ve}}\Psi(x_n^\ve, y) dy - \ddashinttt_{Y_x}\Psi(x, y)  dy\Big]\chi_{\Omega_n^\ve} dx= I_1+I_2+I_3+I_4+I_5 .
\end{eqnarray*} 
The  boundedness of $\{u^\ve\}$ in $H^1(\Omega)$ and  the continuity of $\Psi$ and $D$   ensure 
   $\lim\limits_{\ve \to 0} I_1=0$ and $\lim\limits_{\ve \to 0} I_5=0$.
Now we integrate by parts in $I_2$ and apply equality  \eqref{part_int}. Then 
the $H_1$-boundedness of $\{u^\ve \}$  yields  
\begin{eqnarray*}
|I_2|&=&\Big|\int_\Omega  u^\ve \sum\limits_{n=1}^{N_\ve} 
\Big[ \Psi\big(x_n^\ve, \frac x \ve\big)  -\ddashinttt_{Y_{x_n^\ve}}\Psi(x_n^\ve, y)  dy \Big] \nabla \phi_{\Omega_n^\ve}  dx\Big| \\
&& \qquad \leq C
\Big\| \sum\limits_{n=1}^{N_\ve} \Big[ \Psi\big(x_n^\ve, \frac x \ve\big)   -\ddashinttt_{Y_{x_n^\ve}}\Psi(x_n^\ve, y)  dy  
\Big] \nabla \phi_{\Omega_n^\ve}  \Big\|_{(H^{1}(\Omega))^\prime}.
\end{eqnarray*} 
Applying now convergence  \eqref{convergence_H_1_2} from  Lemma \ref{H_1_conv}  we obtain  $\lim\limits_{\ve \to 0} I_2=0$.
The  convergence  in  \eqref{ApproxCharactF},  the regularity of $\Psi$ and 
boundedness of $\{u^\ve \}$ in $H^1(\Omega)$ imply that  $\lim\limits_{\ve \to 0} I_3=0$.
  Finally,  the integration by parts  and  the $L^2$-convergence of  $\{u^\ve\}$  give 
\begin{equation*}
\lim\limits_{\ve \to 0} I_4 = -\int_\Omega  u(x) \nabla \cdot  \Big(\ddashinttt_{Y_x} \Psi(x, y) d y \Big) dx=
\int_\Omega  \,  \ddashinttt_{Y_x} \nabla u(x)  \Psi(x, y) d y dx.
\end{equation*}
Thus, for any $\Psi \in W^{1, \infty}_0(\Omega; C^\infty_{\text{per}}(Y_x)^d)$ with $\nabla_y\cdot \Psi(x,y) =0$,  we have
\begin{equation*}
 \int_\Omega \,  \ddashinttt_{Y_x}\big(\xi - \nabla u(x) \big) \Psi(x, y) d y dx =0.
\end{equation*}
The Helmholtz decomposition, \cite{Allaire, Galdi, Nguetseng},   yields  that the orthogonal to solenoidal fields are  gradient fields, i.e. 
there exists a function $u_1$ from $\Omega$ to $H^1_{\text{per}}(Y_x)/\mathbb R$, such that 
$ \xi(x, \cdot) - \nabla u(x) = \nabla_y u_1(x, \cdot) $ for a.a. $x \in \Omega$. Then, using the integrability of $\xi$ and $\nabla u$ we conclude that  
$u_1 \in L^2(\Omega; H^1_{\text{per}}(Y_x)/\mathbb R)$. 
\hfill \end{proof}

In analogue  to the two-scale convergence with periodic test functions, \cite{Allaire, Lukkassen},  we  show, under an additional assumption, the convergence of the product of two locally-periodic two-scale convergent sequences.
\begin{lemma}\label{StrongTwo_ScaleConvergence}
Let $\{u^\ve\}\subset L^2(\Omega)$ be a sequence   that converges locally-periodic two-scale  to $u\in L^2(\Omega; L^2(Y_x))$   and assume that 
\begin{equation}\label{strong_two_scale}
\lim\limits_{\ve\to 0}\| u^\ve \|^2_{L^2(\Omega)}=\int_\Omega   \ddashinttt_{Y_x}  |u(x,y)|^2   dy dx.
\end{equation}
Then for  $\{v^\ve\}\subset L^2(\Omega)$ that  converges  locally-periodic two-scale  to $v\in L^2(\Omega; L^2(Y_x))$ we have
\[
u^\ve(x) v^\ve(x) \rightharpoonup \ddashinttt_{Y_x} u(x,y) v(x,y) dy \quad  \text{ weakly in } \mathcal D^\prime(\Omega) \quad \text{ as }\,  \ve \to 0.
\]
\end{lemma}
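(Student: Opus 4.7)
The plan is to follow the scheme classical for periodic two-scale convergence, adapted via the approximation operator $\mathcal L^\ve$. The strong convergence hypothesis \eqref{strong_two_scale} will be converted into the statement that $u^\ve$ is well approximated in $L^2(\Omega)$ by $\mathcal L^\ve \psi_m$ for smooth $\psi_m$ close to $u$, and then the standard locally-periodic two-scale convergence of $\{v^\ve\}$ will do the rest.

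First I would fix $\phi \in \mathcal D(\Omega)$ and aim to show
\[
\lim_{\ve \to 0}\int_\Omega u^\ve(x)\, v^\ve(x)\, \phi(x)\, dx
= \int_\Omega \ddashinttt_{Y_x} u(x,y)\, v(x,y)\, \phi(x)\, dy\, dx.
\]
By density, choose a sequence $\{\psi_m\}\subset L^2(\Omega; C_{\text{per}}(Y_x))$ with $\psi_m \to u$ in $L^2(\Omega; L^2(Y_x))$, and write the integral as
\[
\int_\Omega u^\ve v^\ve \phi\, dx
= \int_\Omega \big(u^\ve - \mathcal L^\ve \psi_m\big)\, v^\ve \phi\, dx
+ \int_\Omega \mathcal L^\ve \psi_m\, v^\ve \phi\, dx.
\]

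The key step is to control the first term by showing
\[
\limsup_{\ve \to 0}\, \|u^\ve - \mathcal L^\ve \psi_m\|_{L^2(\Omega)}^2
\leq \int_\Omega \ddashinttt_{Y_x} |u(x,y) - \psi_m(x,y)|^2\, dy\, dx,
\]
which follows by expanding the square: the term $\|u^\ve\|^2_{L^2(\Omega)}$ passes to $\int_\Omega \dashint_{Y_x}|u|^2\,dy\,dx$ by hypothesis \eqref{strong_two_scale}; the cross term $-2\int_\Omega u^\ve\, \mathcal L^\ve\psi_m\,dx$ converges to $-2\int_\Omega \dashint_{Y_x} u\,\psi_m\,dy\,dx$ by the locally-periodic two-scale convergence of $\{u^\ve\}$; and $\|\mathcal L^\ve \psi_m\|_{L^2(\Omega)}^2$ converges to $\int_\Omega \dashint_{Y_x}|\psi_m|^2\,dy\,dx$ by Lemma~\ref{converg_ost}, formula \eqref{LocPeriodConver} with $p=2$. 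Then the Cauchy--Schwarz inequality and the $L^2$-boundedness of $\{v^\ve\}$ (from its two-scale convergence, applied to the test function $\psi\equiv 1$, or alternatively the lower semicontinuity of the norm) give
\[
\Big| \int_\Omega (u^\ve - \mathcal L^\ve\psi_m)\, v^\ve \phi\, dx \Big|
\leq \|\phi\|_{L^\infty(\Omega)}\, \|v^\ve\|_{L^2(\Omega)}\, \|u^\ve - \mathcal L^\ve\psi_m\|_{L^2(\Omega)},
\]
whose $\limsup_{\ve\to 0}$ tends to $0$ as $m\to\infty$.

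For the second term, I would exploit that $\phi$ is independent of $y$, so that $\mathcal L^\ve \psi_m(x)\, \phi(x) = \mathcal L^\ve (\psi_m \phi)(x)$ and $\psi_m \phi \in L^2(\Omega; C_{\text{per}}(Y_x))$ is an admissible test function in Definition~\ref{def_two-scale}. Hence, for each fixed $m$,
\[
\lim_{\ve\to 0}\int_\Omega v^\ve\, \mathcal L^\ve(\psi_m \phi)\, dx
= \int_\Omega \ddashinttt_{Y_x} v(x,y)\, \psi_m(x,y)\, \phi(x)\, dy\, dx,
\]
and the Cauchy--Schwarz inequality together with $\psi_m \to u$ in $L^2(\Omega; L^2(Y_x))$ shows that the right-hand side converges as $m\to\infty$ to $\int_\Omega \dashint_{Y_x} v\, u\, \phi\, dy\, dx$. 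Combining the two limits via a standard diagonal argument in $\ve,m$ yields the desired convergence in $\mathcal D'(\Omega)$.

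The only mildly delicate point is the double limit: the first term is small uniformly in $\ve$ for large $m$ (in the sense of $\limsup_{\ve\to 0}$), while the second term converges for each fixed $m$. The argument is a standard $3\ve$-splitting, choosing $m$ first so that both the approximation error $\|u-\psi_m\|_{L^2(\Omega;L^2(Y_x))}$ and the resulting bound are $<\ve/3$, and then $\ve$ small enough so that the second term is within $\ve/3$ of its limit and the first term is within $\ve/3$ of its $\limsup$. I expect no further obstacle beyond this careful ordering of limits.
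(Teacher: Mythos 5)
Your proposal is correct and follows essentially the same route as the paper: approximate $u$ by $\psi_m \in L^2(\Omega; C_{\text{per}}(Y_x))$, show via expansion of the square (using \eqref{strong_two_scale}, the l-t-s convergence of $\{u^\ve\}$, and \eqref{LocPeriodConver} with $p=2$) that $\lim_{\ve\to 0}\|u^\ve-\mathcal L^\ve\psi_m\|_{L^2(\Omega)}^2$ is controlled by $\|u-\psi_m\|^2_{L^2(\Omega;L^2(Y_x))}$, and then split $\int u^\ve v^\ve\phi\,dx$ so that the main term is handled by testing the l-t-s convergence of $\{v^\ve\}$ against $\psi_m\phi$ and the remainder by Cauchy--Schwarz with the $L^2$-boundedness of $\{v^\ve\}$, finishing with the iterated limit in $\ve$ and $m$.
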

\begin{proof}
Let $\{\psi_k\}$ be a sequence of  functions in $L^2(\Omega; C_{\text{per}}(Y_x))$ that converges to $u$ in  $L^2(\Omega; L^2(Y_x))$. 
Convergence \eqref{LocPeriodConver} for $L^2(\Omega; C_{\text{per}}(Y_x))$-functions,
the definition of the  locally-periodic two-scale convergence, and  assumption \eqref{strong_two_scale} ensure 
\begin{eqnarray*}
\lim\limits_{\ve\to 0}\int_\Omega \left[ u^\ve(x) -\mathcal L^\ve \psi_k (x)  \right]^2 dx =
\int_\Omega \ddashinttt_{Y_x} [u(x,y)- \psi_k(x,y) ]^2 dy dx. 
\end{eqnarray*}
The limit  as $k\to \infty$ in  the last equality and 
the  strong convergence of $\psi_k$ to $u$ imply
\begin{equation}\label{ConverStrogAppr}
\lim\limits_{k\to \infty}\lim\limits_{\ve\to 0}\int_\Omega \left[ u^\ve(x) - \mathcal L^\ve  \psi_k(x) \right]^2 dx =0.
\end{equation}
For $\phi\in \mathcal D (\Omega)$ we consider now
\begin{eqnarray*}
\int_\Omega  u^\ve(x) v^\ve(x) \phi(x) dx = \int_\Omega  \mathcal L^\ve  \psi_k(x)  v^\ve (x) \phi(x) dx +
\int_\Omega \left[ u^\ve(x)- \mathcal L^\ve \psi_k(x) \right]  v^\ve (x) \phi(x)  dx.
\end{eqnarray*}
Applying   the l-t-s convergence and $L^2$-boundedness  of $v^\ve$  in the last equality we obtain
\begin{eqnarray*}
\Big| \lim\limits_{\ve \to 0} \int_\Omega  u^\ve(x) v^\ve(x) \phi   \, dx-
\int_\Omega \ddashinttt_{Y_x}   \psi_k(x,y)  v(x,y)  \phi \, dy dx \Big| 
\leq C  \lim\limits_{\ve\to 0}\int_\Omega \left| u^\ve -\mathcal L^\ve \psi_k\right|^2 dx.
\end{eqnarray*}
Then, letting  $k\to \infty$ and using \eqref{ConverStrogAppr} we obtain  the convergence stated in Lemma. 
\hfill\end{proof}

We shall refer  to    l-t-s convergent sequence  satisfying \eqref{strong_two_scale} as strongly  l-t-s convergent. 


\section{Homogenization of  plywood structures}\label{Application}
Now we return to our main  problem \eqref{main}, presented in  Section~\ref{ModelD}. Results in this section require us to introduce some standard   regularity and ellipticity constraints on the given vector functions $G$, $g$ and  tensors $E_1$, $E_2$. We assume $g \in H^1(\Omega)$, $G\in L^2(\Omega)$ and   $E_1, E_2$ are symmetric, i.e. $E_{m, ijkl}=E_{m, klij}=E_{m, jikl}=E_{m, ijlk}$ for $m=1,2$, and positive definite, i.e  $E_1 \xi: \xi \geq \alpha |\xi|^2$, $E_2 \xi: \xi \geq \beta |\xi|^2$  for all symmetric matrices $\xi \in \mathbb R^{3\times 3}$ and   $\alpha,  \beta>0$.

\begin{definition}
 The function $u^\ve$ is called a weak solution of the problem~\eqref{main} if $u^\ve - g \in H^1_0(\Omega) $   and  satisfies 
\begin{equation}\label{main1}
 \int_\Omega E^\ve(x) \, e(u^\ve)\, e(\psi)\, dx= \int_\Omega G (x)\, \psi\, dx \, \hfill\qquad  \text{ for all } \, \,  \psi \in H^1_0(\O).
\end{equation} 
\end{definition}

Due to our assumptions, the tensor $E^\ve$, given by  \eqref{stif_loc_period}, satisfies the Legendre conditions. Since $E_1$ and $E_2$ are constant, we have also the uniform boundedness  of $E^\ve$.  Thus, there exists a unique weak solution of the problem \eqref{main},  see \cite{Allaire2002}, and 
\begin{lemma}\label{apriori} 
Any solution of   the  model \eqref{main}  satisfies the  estimate 
 \begin{eqnarray*}
  ||u^\ve||_{H^1(\Omega)} \leq C,
 \end{eqnarray*}
where  $C$ is a constant independent of $\ve$.
\end{lemma}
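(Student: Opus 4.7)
The plan is to carry out a standard energy estimate, using the weak formulation \eqref{main1} with a suitable test function, together with the uniform ellipticity of $E^\ve$ and Korn's inequality on $H^1_0(\Omega)$.

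First, I would set $\psi = u^\ve - g \in H^1_0(\Omega)$ in \eqref{main1}, obtaining
\[
\int_\Omega E^\ve e(u^\ve):e(u^\ve)\,dx = \int_\Omega E^\ve e(u^\ve):e(g)\,dx + \int_\Omega G\cdot(u^\ve - g)\,dx.
\]
The tensor $E^\ve$ defined in \eqref{stif_loc_period} is, by the hypotheses on $E_1,E_2$, pointwise bounded by $\max(\|E_1\|,\|E_2\|)$ and pointwise coercive with constant $\mu := \min(\alpha,\beta) > 0$, uniformly in $\ve$, because at every $x$ it equals either $E_1$ or $E_2$. Hence the left hand side dominates $\mu\|e(u^\ve)\|_{L^2(\Omega)}^2$.

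Next, I would bound the right hand side by Cauchy--Schwarz:
\[
\Bigl|\int_\Omega E^\ve e(u^\ve):e(g)\,dx\Bigr| \leq C\|e(u^\ve)\|_{L^2(\Omega)}\|e(g)\|_{L^2(\Omega)},
\qquad
\Bigl|\int_\Omega G\cdot(u^\ve - g)\,dx\Bigr| \leq \|G\|_{L^2(\Omega)}\|u^\ve - g\|_{L^2(\Omega)},
\]
and absorb the $\|e(u^\ve)\|_{L^2}$ factor using Young's inequality. To control $\|u^\ve - g\|_{L^2(\Omega)}$ I would invoke Korn's inequality on $H^1_0(\Omega)$, namely $\|v\|_{H^1(\Omega)} \leq C_K \|e(v)\|_{L^2(\Omega)}$ for $v \in H^1_0(\Omega)$, applied to $v = u^\ve - g$, together with the Poincaré inequality. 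This yields
\[
\|u^\ve - g\|_{H^1(\Omega)} \leq C_K\|e(u^\ve - g)\|_{L^2(\Omega)} \leq C_K\bigl(\|e(u^\ve)\|_{L^2(\Omega)} + \|e(g)\|_{L^2(\Omega)}\bigr).
\]
Combining the coercivity lower bound with the upper bounds and using Young's inequality to absorb $\|e(u^\ve)\|_{L^2(\Omega)}$ into the left hand side gives
\[
\|e(u^\ve)\|_{L^2(\Omega)}^2 \leq C\bigl(\|e(g)\|_{L^2(\Omega)}^2 + \|G\|_{L^2(\Omega)}^2\bigr).
\]
A final application of Korn's inequality to $u^\ve - g$, and the triangle inequality with $\|g\|_{H^1(\Omega)}$, yields the desired $\ve$-independent $H^1$-bound on $u^\ve$.

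The only subtle step is the use of Korn's inequality: one must apply it to $u^\ve - g$ (which lies in $H^1_0(\Omega)$) rather than to $u^\ve$ itself, since the standard Korn estimate requires either zero boundary trace or a zero rigid-motion component. Otherwise the argument is entirely routine, and the constants are manifestly independent of $\ve$ because the ellipticity constant $\mu$ and the upper bound on $E^\ve$ depend only on $E_1,E_2$, not on the geometry of $\Omega_f^\ve$.
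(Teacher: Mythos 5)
Your proof is correct and follows essentially the same route as the paper's own (sketched) argument: testing \eqref{main1} with $u^\ve - g$, using the $\ve$-uniform coercivity and boundedness of $E^\ve$ inherited from $E_1, E_2$, and Korn's inequality on $H^1_0(\Omega)$ applied to $u^\ve - g$. The details you supply, including the absorption via Young's inequality and the final triangle inequality with $\|g\|_{H^1(\Omega)}$, fill in the paper's sketch faithfully.
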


{\it Proof sketch}. Considering $u^\ve - g$ as a test function in \eqref{main1} and applying the regularity assumptions on $G$ and $g$,  the coercivity of $E^\ve$, and the Korn inequality for functions in  $H^1_0(\Omega)$, we obtain the stated estimate. \qed

To define the effective elastic properties of a material with plywood microstructure we shall derive macroscopic equations for the microscopic model \eqref{main} using the notion of the locally-periodic  two-scale convergence,  introduced in Section~\ref{TwoScaleConv}.  

In the    locally-periodic plywood structure  the rotation angle is constant in each layer $L_k^\ve$ and the characteristic function of the domain occupied by fibres can  be defined considering an additional  division of  $L_k^\ve$ into  cubes  of side $\ve^r$ with $r\in (0,1)$. Notice that the microstructure is give by the rotation around the fix $x_3$-axis with a rotation angle dependent only on $x_3$.
Thus  the rotation of a correspondent set of cubes will  reproduce  the cylindrical structure of the fibres.
Regarding a possible change of enumeration, we consider  the layers $L_k^\ve$  such that $\Omega\subset \cup_{k=1}^{K_\ve} \overline L_k^\ve$ and  $L_k^\ve \cap \Omega \neq \emptyset$. Then  the number of layers having non-empty intersection with  $\Omega$ satisfies  $K_\ve\leq(\text{diam}(\Omega)+2)\ve^{- r}$. Each layer $L_k^\ve$ we can divide into open non-intersecting cubes $\Omega_n^\ve$ of  side $\ve^r$  and consider a family of  cubes $\{\Omega_n^\ve\}_{n=N_\ve^{k-1}+1}^{N_\ve^k}$ such that 
 \begin{equation*}
  \Omega \cap L_k^\ve \subset \cup_{n=N_\ve^{k-1}+1}^{N_{\ve}^k} \overline\Omega_n^\ve \, \text{ and } 
  \, \Omega_n^\ve \cap (\Omega\cap L_k^\ve)\neq \emptyset, 
\end{equation*}
where  $N_\ve^0=0$ and  $N_\ve^k \leq (\text{diam}(\Omega) +2)^2\ve^{-2 r}$ for $k=1, \ldots, K_\ve$. In this way we obtain a covering of $\Omega$ by the family of  cubes satisfying the  estimates on $N_\ve=\sum_{k=1}^{K_\ve} N_\ve^k$, $\tilde N_\ve$, and $\mathcal K_\ve$,  stated in Section~\ref{TwoScaleConv}.
Then  for $k=1,\ldots, K_\ve$ and for any $n, m \in \mathbb N$ with $N_\ve^{k-1}+1\leq n,m \leq N_\ve^k$ we  choose $x_n^\ve \in \Omega_n^\ve$  and $x_m^\ve \in \Omega_m^\ve$ such that $x_{n,3}^\ve= x_{m,3}^\ve$, i.e. the points $x_n^\ve$ have the same third component if they belong to the same layer $L_k^\ve$.

For $x\in \mathbb R^3$ we consider  now the deformation matrix  given by the rotation matrix $R^{-1}_{x_3}=R^{-1}(\gamma(x_3))$, with
  $R$  and  $\gamma$  defined  in Section~\ref{ModelD},  and obtain 
a continuous family of rotated  cubes $Y_{x_3}=R^{-1}_{x_3}Y$. 
Due to regularity of $\gamma$, i.e. $\gamma \in C^2(\mathbb R)$, and $|\det R^{-1}_{x_3}|=1$ 
for all $x\in \mathbb R^3$,  the  matrix  $R^{-1}_{x_3}$ fulfils assumptions posed in Section~\ref{TwoScaleConv}.

Using the notion of the locally-periodic approximation introduced in Section~\ref{TwoScaleConv},  the characteristic function  $\chi_{\Omega_f^\ve}$,  defined in  \eqref{charac_local_per}, can be written as 
\begin{equation*}
\chi_{\Omega_f^\ve}(x)= \mathcal L^\ve_0 \eta(x)\, \chi_\Omega(x),
\end{equation*}
where $\eta(x, y)= \tilde \eta(R_{x_3} y)$ for $y\in Y_{x_3}$ and $\tilde \eta $  given by  \eqref{char}. Here we choose    $\tilde x_n^\ve= R_{x_n^\ve}^{-1} \ve k$ for some $k\in \mathbb Z^3$ and $R_{x_n^\ve}^{-1}=R^{-1}(\gamma(x_{n,3}^\ve))$. The  function  $\tilde \eta $ is constant with respect to $\tilde y_1$ and we  can  introduced formally the periodicity with respect to the first variable. 

Now,   each $\Omega^\ve_{n}$ can be covered by a family of closed  cubes
$\Omega^\ve_{n}\subset \cup_{i=1}^{I_n^\ve} \ve  Y^i_{x_n^\ve}$ such that 
 $\ve Y^i_{x_n^\ve}\cap\overline \Omega^\ve_{n}\neq \emptyset$, where $Y^i_{x_n^\ve}= R_{x_n^\ve}^{-1} (Y+k_i)$ with $k_i \in \mathbb Z^3$. The number $I_n^\ve$, the subset   $\mathcal M_\ve^n=\Omega^\ve_{n}\setminus \cup_{i=1}^{\tilde I_n^\ve} \ve  Y^i_{x_n^\ve}$,  and 
 the number  $\tilde I_n^\ve$ of all cubes enclosed in  $\Omega^\ve_{n}$ 
satisfy the estimates stated in Lemma~\ref{converg_ost} in  Section~\ref{TwoScaleConv}.

We consider  the sequence  $\{u^\ve\}$ of solutions of  \eqref{main}.  A priori estimate in Lemma~\ref{apriori} 
 ensures   the existence of    $u \in H^1(\Omega)$ and  of a  subsequences, denoted again by $\{ u^\ve \}$, such that $u^\ve \rightharpoonup u$ in $H^1$.  Thanks to Theorem~\ref{compactness_theorem}  there exists  another  subsequences of $\{\nabla u^\ve\}$,
 denoted again by $\{\nabla u^\ve \}$, and
  $u_1 \in L^2(\Omega; H^1_{\text{per}}(Y_{x_3})/\mathbb R)$ such that  
\[
u^\ve \to u \quad  \text{ and } \quad \nabla u^\ve \to \nabla u + \nabla_y u_1  
\]
in the  locally-periodic two-scale sense. We consider 
\begin{equation*}
\psi= \psi_1(x) + \ve (\mathcal L^\ve_\rho \psi_2)(x)
\end{equation*}
as a test function in \eqref{main1}, where $ \psi_1\in C^\infty_0(\Omega)$ and $\psi_2 \in W^{1,\infty}_0(\Omega; C^\infty_{\text{per}}(Y_{x_3})). $ 
The regularity assumptions  on $\psi_1$, $\psi_2$ and $ \phi_{\O_n^\ve}$ ensure that  $\psi\in H^1_0(\Omega)$. Then \eqref{main1} reads
\begin{equation}\label{main2}
\int_\Omega (\mathcal L^\ve_0 A)(x)  e(u^\ve) \left(e(\psi_1) + \ve   e(\mathcal L^\ve_\rho\psi_2) \right)dx=
 \int_\Omega G(x) \left(\psi_1 + \ve  \mathcal L^\ve_\rho\psi_2 \right)dx,  
\end{equation}
where $ A(x,y)=E_1\eta(x,y) +E_2(1-\eta(x,y))$. Notice that  $ A\in L^\infty(\cup_{x\in \Omega}\{x\}\times Y_{x_3})$ and $A\in C(\overline\Omega; L^p_{\text{per}}(Y_{x_3}))$ for $1\leq p <\infty$. Since the dependence on $x$ in $A$ occurs only due to its $Y_{x_3}$-periodicity, we have that
  $\mathcal L^\ve_0 A(x)=\mathcal L^\ve A(x) $   a.e. in $\Omega$.
  
To apply the locally-periodic  two-scale convergence in \eqref{main2}  we have to bring the test function in the form involving the locally-periodic approximation,  i.e. replace $\phi_{\Omega_n^\ve}$  by $\chi_{\Omega_n^\ve}$. We rewrite  the left hand side  of \eqref{main2} in the form
\begin{eqnarray}\label{RewriteA}
&& 
\int_\Omega (\mathcal L^\ve_0 A)\,  e(u^\ve) 
\Big[e(\psi_1)+ 
\ve \sum\limits_{n=1}^{N_\ve}\left(  e(\psi_2^{n}) \chi_{\Omega_n^\ve}+  
e(\psi_2^{n})\left(\phi_{\O_n^\ve} -  \chi_{\Omega_n^\ve} \right)
+   \psi_2^{n} \odot \nabla \phi_{\Omega_n^\ve}\right) \Big] dx,\nonumber
\end{eqnarray}
 where $\psi_2^n(x)=\tilde\psi_2 \left(x,{R_{x_n^\ve} x/\ve}\right)$ and $a\odot b= \big(\frac 12(a_i b_j+ a_j b_i)\big)_{1\leq i, j\leq 3}$  for $a, b \in \mathbb R^3$.
 
Considering $\ve \nabla \psi_2^{n}(x)=\ve\nabla_x \psi_2^{n}+ R^{T}_{x_n^{\ve}}\nabla_{\tilde y} \psi_2^n$ and using the regularity of $\psi_2$  together with convergences in Lemma~\ref{converg_ost} applied to $\mathcal L^\ve e_y(\psi_2)$    we obtain
  \begin{eqnarray*}
\lim\limits_{\ve\to 0}\int_{\Omega}  \sum\limits_{n=1}^{ N_\ve}  |\ve e(\psi_2^{n})|^p \chi_{\Omega_n^\ve}=
\int_{\Omega} \ddashinttt_{Y_{x_3}} |e_y(\psi_2(x,y))|^p dy dx  \quad \text{ for } 1\leq p <\infty.
\end{eqnarray*}
Convergence \eqref{EstimOscil2} in Lemma~\ref{converg_ost} implies   
  \begin{eqnarray*}
\lim\limits_{\ve\to 0}\int_{\Omega}|\mathcal L^\ve_0 A(x)|^p dx  =\int_{\Omega} \ddashinttt_{Y_{x_3}}|A(x,y)|^p dy dx   \quad \text{ for } 1\leq p <\infty.
\end{eqnarray*}
Then,   for a sequence $\{\Phi^\ve\}\subset L^\infty(\Omega)$ given by
$$\Phi^\ve(x)=\mathcal L^\ve_0 A(x) \, 
\Big[e(\psi_1(x))+ \ve \sum\limits_{n=1}^{N_\ve}  e(\psi_2^{n}(x)) \chi_{\Omega_n^\ve}(x) \Big]
$$ 
we can conclude that  $\{\Phi^\ve\}$ is  bounded in  $L^2(\Omega)$ and satisfies   assumption \eqref{strong_two_scale} of the strong locally-periodic  two-scale convergence  stated in Lemma~\ref{StrongTwo_ScaleConvergence}, i.e.  
\begin{eqnarray*}
 \lim\limits_{\ve \to 0}\int_{\Omega}  |\Phi^\ve(x)|^2  dx
=\int_{\Omega}\ddashinttt_{Y_{x_3}} \left|A(x,y)\big[ e(\psi_1(x)) + e_y(\psi_2(x,y))\big]\right|^2 dydx.
\end{eqnarray*}
Boundedness of $\{u^\ve\}$ in $H^1(\Omega)$, $L^2$-convergence of  $\phi_{\O_n^\ve}$,  and   the regularity of $\psi_2$ give
\begin{eqnarray}\label{LHS_A}
 && \lim\limits_{\ve\to 0}  \ve \int_{\Omega }  \mathcal L^\ve_0 A \,   e(u^\ve)   
\sum\limits_{n=1}^{N_\ve} e(\psi_2^{n}) (\phi_{\O_n^\ve} 
- \chi_{\Omega_n^\ve}) dx =0.
\end{eqnarray}
 The assumption  $||\nabla \phi_{\O_n^\ve}||_{L^\infty(\mathbb R^d)}\leq C \ve^{-\rho}$, where
$0<r<\rho<1$, ensures  
\begin{equation}\label{LHS_Grad}
 \lim\limits_{\ve\to 0} \ve \int_{\Omega } \mathcal L^\ve_0  A \, e(u^\ve)  \sum\limits_{n=1}^{N_\ve}
\psi_2^{n} \odot\nabla \phi_{\O_n^\ve}
 dx=0.
\end{equation}
The regularity  of $\psi_2$ and  $G$ imply that the second term on the right hand side of \eqref{main2} convergences to zero as $\ve\to 0$. 

 Thus, taking into account  convergences  \eqref{LHS_A} and \eqref{LHS_Grad},  the strong l-t-s convergence of $\{\Phi^\ve\}$,  the l-t-s convergence for a  subsequence of $\{\nabla u^\ve\}$, denoted again by $\{\nabla u^\ve\}$,  we can pass to the limit as $\ve \to 0$ in 
\eqref{main2} 
and obtain    
\begin{equation*}
 \int_\Omega  \ddashinttt_{Y_{x_3}} A(x,y) \big(e(u)+   e_y(u_1(x,y)) \big)\big(  e(\psi_1)+ e_y(\psi_2(x,  y))\big) dy dx= \int_\Omega   G(x) \psi_1  dx.
\end{equation*}
Then   coordinate transformation $\mathcal F : Y_{x_3}\to  Y$, i.e. $\tilde y = \mathcal F(y)=  R_{x_3}y$, yields
\begin{equation}\label{macro1}
 \int_\Omega  \ddashinttt_{ Y }\tilde A(\tilde y)\big(e(u)+ e^R_{\tilde y}(\tilde u_1)\big)
\big(e(\psi_1)+ e^R_{\tilde y}(\tilde \psi_2) \big)d\tilde y dx=  \int_\Omega  G(x)\,  \psi_1  dx,
\end{equation}
where 
\begin{equation}\label{def_trans_grad} 
e^R_{\tilde y,kl}(v)=\frac 12 \Big(\big[{R}^{T}_{x_3} \nabla_{\tilde y}  v^l\big]_k 
+\big[R^{T}_{x_3} \nabla_{\tilde y}  v^k\big]_l \Big)
\end{equation}
and $\tilde A(\tilde y)=E_1 \tilde\eta(\tilde y)+ E_2(1-\tilde\eta(\tilde y))$ for $\tilde y \in Y$. 

By density argument,  \eqref{macro1} holds also for 
$\psi_1 \in H^1_0(\Omega)$ and $\tilde \psi_2 \in L^2(\Omega; H^1_{\text{per}}(Y)/\mathbb R)$. 
Taking $\psi_1=0$ and using  linearity of the problem we  conclude that  $\tilde u_1$  has the form
\[
\tilde u_1(x,\tilde y)= \frac 12\sum\limits_{i,j=1}^3 \Big( \frac{\partial u^i(x)}{\partial x_j} + \frac{\partial u^j(x)}{\partial x_i} \Big)
\tilde \omega_{ij}(x_3,\tilde y),
\]
where $\tilde \omega_{ij}(x_3,\tilde y)$ are solutions of  unit cell problems  
\begin{eqnarray}\label{unit_loc_per}
-\nabla_{\tilde y}\cdot \Big(
  {R}_{x_3} \tilde A (\tilde y)  e^R_{\tilde y}(\tilde \omega_{ij}) \Big)
  =  \nabla_{\tilde y}\cdot \Big({R}_{x_3} \tilde A(\tilde y) l_{ij} \Big) & \quad \text{ in }  Y, \\
 \tilde{\omega}_{ij} \hspace{1.0 cm} \text{ periodic } & \quad \text{ in }  Y. \nonumber
\end{eqnarray}
Here ${l}_{ij}= \frac 12 (l_i \otimes l_j + l_j \otimes l_i)$ are symmetric matrices, whereas  $(l_i)_{1\leq i\leq 3}$ is the canonical basis of $\mathbb R^{3}$.
 Since $\tilde A$ is independent of $\tilde y_1$ and solutions of the problems \eqref{unit_loc_per} are unique up to a constant, 
 we obtain that $\tilde\omega_{ij}$ does not depend on $\tilde y_1$. Thus  \eqref{unit_loc_per} can be reduced to the two-dimensional problems 
 \begin{eqnarray}\label{unit_loc_per2}
-\nabla_{\hat y}\cdot \Big(
 \hat {R}_{x_3} \tilde A(\hat y) \hat e_{\hat y}^R(\tilde{\omega}_{ij})\Big) 
  = \nabla_{\hat y}\cdot \Big(\hat {R}_{x_3} \tilde A(\hat y) l_{ij} \Big) & \quad \text{ in } \hat  Y, \\
 \tilde{\omega}_{ij} \hspace{1.0 cm} \text{ periodic } & \quad \text{ in }  \hat Y, \nonumber
\end{eqnarray}
 where $\hat Y= Y \cap \{\tilde y_1=0\}$ with  $\hat y=(\tilde y_2, \tilde y_3)$,  $\tilde A(\hat y):=\tilde A(\tilde y)$, and 
 \begin{equation}\label{transform_reduce}
\hat e_{\hat y,kl}^R(v)=\frac 12 \Big[\big(\hat {R}^{T}_{x_3} 
\nabla_{\hat y} v^l\big)_k +\big(\hat {R}^{T}_{x_3} \nabla_{\hat y} v^k\big)_l \Big], \quad
\hat{R}_{x_3}\hspace{-0.1 cm}=\hspace{-0.1 cm} \left(\hspace{-0.15 cm}
\begin{array}{ccc}
-\sin \gamma(x_3) & \hspace{-0.1 cm} 
 \cos \gamma(x_3)& \hspace{-0.15 cm} 0 \\
 0&\hspace{-0.1 cm}  0&\hspace{-0.15 cm} 1
\end{array}\hspace{-0.1 cm}\right).
\end{equation}
Thus,  for locally-periodic plywood structure  we have that 
\begin{theorem}\label{Theorem_macro_loc_per}
 The sequence of microscopic solutions $\{u^\ve\}$ of \eqref{main}, with the elasticity tensor given by \eqref{stif_loc_period}, converges to a  solution of the macroscopic problem 
\begin{equation}\label{macro-local}
\begin{cases}
 -\text{div }  \sigma(x_3, u) = G & \quad \text{ in } \Omega, \\
\hspace{1.9 cm} u=g  & \quad \text{ on } \partial\Omega,
\end{cases}
\end{equation}
where $\sigma(x_3,u)= A^{\text{hom}}(x_3) e(u)$ with
$A^{\text{hom}} $  given by
\begin{eqnarray*}
A^{\text{hom}}_{ijkl}(x_3) &=&\ddashinttt_{\hat Y} \Big(\tilde A_{ijkl}(\hat y) +
\tilde A(\hat y)\hat e_{\hat y}^R (\tilde \omega_{ij})_{kl} \Big)d\hat y
\end{eqnarray*}
and  $\tilde \omega_{ij}$  are  solutions of the cell problems \eqref{unit_loc_per2}.
\end{theorem}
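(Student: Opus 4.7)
The excerpt has essentially completed the two-scale analysis up to the limit equation \eqref{macro1}. The remaining work is to extract from it the macroscopic equation and effective tensor, verify the symmetries and ellipticity of $A^{\text{hom}}$, and promote subsequential convergence to convergence of the whole sequence.

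First I would localize \eqref{macro1}. Taking $\psi_1 = 0$ and varying $\tilde\psi_2 \in L^2(\Omega;H^1_{\text{per}}(Y)/\mathbb R)$ gives, for a.e.\ $x \in \Omega$, a cell problem on $Y$ with linear dependence on $e(u)(x)$. By Lax--Milgram on $H^1_{\text{per}}(Y)/\mathbb R$ --- coercivity coming from that of $\tilde A$ together with a periodic Korn inequality applied after the invertible change of variables encoded by $R_{x_3}$ --- the corrector admits the ansatz $\tilde u_1 = \sum_{ij}e_{ij}(u)\,\tilde\omega_{ij}$ with $\tilde\omega_{ij}$ as in \eqref{unit_loc_per}. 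Since $\tilde A$ is independent of $\tilde y_1$, uniqueness of the cell correctors forces $\tilde\omega_{ij}$ to be independent of $\tilde y_1$ as well, reducing \eqref{unit_loc_per} to the two-dimensional problem \eqref{unit_loc_per2} posed on $\hat Y$ with the truncated operator $\hat e^R_{\hat y}$ and the reduced rotation $\hat R_{x_3}$ from \eqref{transform_reduce}.

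Next, I would put $\tilde\psi_2 = 0$ in \eqref{macro1} and substitute this ansatz. By Fubini the bracketed average over $\hat Y$ collapses to precisely the stated $A^{\text{hom}}_{ijkl}(x_3)$, yielding the weak form of \eqref{macro-local}. Standard corrector manipulations --- testing the cell problem for $(i,j)$ against $\tilde\omega_{kl}$ and symmetrising --- give the major and minor symmetries of $A^{\text{hom}}$, while the energy-infimum representation of $A^{\text{hom}}(x_3)\xi:\xi$ over $\tilde\phi\in H^1_{\text{per}}(\hat Y)/\mathbb R$, combined with coercivity of $\tilde A$, gives coercivity of $A^{\text{hom}}$. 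The boundary trace $u|_{\partial\Omega}=g$ is inherited from $u^\ve - g \in H^1_0(\Omega)$ and the weak convergence $u^\ve \rightharpoonup u$ in $H^1(\Omega)$, so $u$ is a weak solution of \eqref{macro-local}.

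Symmetry and coercivity of $A^{\text{hom}}$ give uniqueness of the solution to \eqref{macro-local} via Lax--Milgram; combined with the fact (from Theorem~\ref{compactness_theorem} and Theorem~\ref{L2_lp_covregence}) that every subsequence of $\{u^\ve\}$ admits a further subsequence converging to a weak solution of the same macroscopic problem, this upgrades subsequential convergence to convergence of the entire sequence. The main genuine obstacle I anticipate is the uniform-in-$x_3$ ellipticity of $A^{\text{hom}}$, which requires a Korn-type inequality on $H^1_{\text{per}}(\hat Y)/\mathbb R$ for the rotated symmetric gradient $\hat e^R_{\hat y}$ with constants independent of $\gamma(x_3)$; this follows from the classical periodic Korn inequality together with the orthogonality of $R_{x_3}$, but the uniformity has to be tracked carefully, and it is also what underwrites the invocation of Lax--Milgram at the cell level in the first paragraph.
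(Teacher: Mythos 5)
Your proposal is correct and follows essentially the same route as the paper: starting from \eqref{macro1}, take $\psi_1=0$ and linearity to get the corrector ansatz and cell problems, use $\tilde y_1$-independence of $\tilde A$ plus uniqueness to reduce to \eqref{unit_loc_per2}, then take $\tilde\psi_2=0$ and substitute the ansatz to read off $A^{\text{hom}}$, and finally use symmetry and positive definiteness of $A^{\text{hom}}$ (hence uniqueness of the macroscopic solution) to upgrade subsequential to whole-sequence convergence. The additional details you supply (Lax--Milgram with a uniform rotated periodic Korn inequality, the energy representation for coercivity, the boundary trace) correspond to points the paper only asserts briefly after the theorem, so they are a welcome elaboration rather than a departure.
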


Considering the properties of the  matrix $R(\gamma(x_3))$ and the fact that $E_1$ and $E_2$ are  constant, symmetric and positive definite, yields  that  the homogenized tensor $A^{\text{hom}}$ is symmetric, positive definite and uniformly bounded.  This ensures  the existence of a unique weak  solution of the macroscopic model \eqref{macro-local}  and the convergence of the entire sequence of solutions of the microscopic problems \eqref{main}. 


\begin{figure}
\begin{center}
{\includegraphics[width=0.85\textwidth]{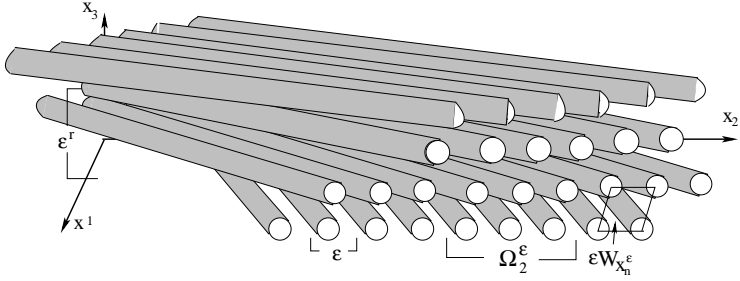}
\caption{\small{Non-periodic plywood-structure}} \label{fig_non_per_plywood}}
\end{center}
\end{figure} 

Now we consider the non-periodic plywood structure, where the layers of fibres aligned in the same direction are of the height $\ve$. For the analysis of the non-periodic problem it is convenient to define  the characteristic function of the domain  occupied by fibres in a different form,   equivalent to  \eqref{charac_local_per} for $r=1$.
 We consider the  function
 \[\vartheta(y)= \begin{cases}       1, \quad |\hat y|\leq a,\\         0, \quad |\hat y|>a       \end{cases}
\]
with $\hat y = (y_2, y_3)$ and  $a< 1/2$. 
For $k\in \mathbb Z^3$ we define   $ x_{k}^\ve=R^{-1}_{k} \ve k$ with $R^{-1}_{k}:= R^{-1}(\gamma(\ve k_3))$. Notice  that  $x_{k,3}^{\ve} =\ve k_3$,    the third variable is invariant under the rotation $R^{-1}_k$. Then the characteristic function of fibres for non-periodic microstructure reads
\begin{equation}\label{elastnonper}
\chi_{\Omega_f^\ve} (x)
=\chi_{\Omega}(x)\sum\limits_{k\in \mathbb Z^3}  \vartheta_\ve\big(R_{k}( x- x_{k}^\ve)\big), \quad \text{ where } \quad  \vartheta_\ve(x)= \vartheta\left(\frac x\varepsilon\right).
\end{equation}

To derive the macroscopic equations for the model \eqref{main} with elasticity tensor $E^\ve=E_1\chi_{\Omega_f^\ve}+ E_2(1-\chi_{\Omega_f^\ve})$,  where $\chi_{\Omega_f^\ve}$ is given by \eqref{elastnonper}, we shall approximate it by a  locally-periodic  problem and apply the locally-periodic two-scale convergence. The following calculations illustrate the motivation for the  locally-periodic approximation. 
 
 We consider  a partition covering of  $\Omega \subset \cup_{n=1}^{N_\ve}\overline \Omega_n^\ve$, as defined  in Section~\ref{TwoScaleConv}. For  $n=1,\ldots, N_\ve$  we choose  $\kappa_n \in \mathbb Z^3$  such that for $x_{n}^\ve=R^{-1}_{\kappa_{n}}\ve \kappa_n $ we have $x_{n}^\ve \in \Omega_n^\ve$.  We  cover $\Omega_n^\ve$ by shifted parallelepipeds  $\Omega_n^\ve\subset x_{n}^\ve +\cup_{j=1}^{I_n^\ve} \ve Y^j_{x_{n}^\ve}$, where    $Y^j_{x_{n}^\ve}=D_{x_{n}^\ve}(Y+m_j)$ for  $m_j\in \mathbb Z^3$ and a matrix $D(x)$, that will be specified later. Then    for    $1\leq j \leq I_n^\ve$  we consider   $k_j^n = \kappa_n + m_j$ and   $x_{k_j^n}^\ve= R^{-1}_{k_{j}^{n}}\ve k_j^n$.  Using the  regularity assumptions on $\gamma$  and  the Taylor expansion for $R$ around  $x_{\kappa_n}^\ve$, i.e. around $\ve\kappa_{n,3}$,  we obtain 
\begin{eqnarray}\label{Taylor_Approx}
 &&R_{k_{j}^{n}}(x- x_{k_j^n}^\ve) =R_{k_{j}^{n}} x- \ve k_j^n \nonumber \\
 &&\qquad \quad  = R_{\kappa_{n}}x +  R_{\kappa_n}^\prime x_{n}^\ve  m_{j,3}\ve +R_{\kappa_n}^\prime(x- x_{n}^\ve) 
 m_{j,3}\ve+ b(|m_{j,3} \ve|^2)x -  \ve(\kappa_n+ m_j) \nonumber\\ &&\qquad \quad
= R_{\kappa_{n}}(x  -x_{n}^\ve)- \tilde W_{x_{n}^\ve}m_j\ve +R_{\kappa_{n}}^\prime(x- x_{n}^\ve)  m_{j,3}\ve+ b(|m_{j,3} \ve|^2)x,  
\end{eqnarray}
where $\tilde W_{x_{n}^\ve}= \tilde W(x_{n}^\ve)$ with $\tilde W(x)=(I- \nabla R(\gamma(x_3)) x)$.  The notation of the gradient is understood as  $\nabla R(\gamma(x))x=\nabla_z(R(\gamma(z))x)|_{z=x}$. Thus for  $x\in \Omega_n^\ve$ the distance  between  
  $R_{\kappa_n} (x -x_n^\ve)- \tilde W_{x_n^\ve} m_j\ve$ and   $R_{k_{j}^{n}}(x-  x_{k_j^n}^\ve)$ is of  the order $\sup\limits_{1\leq j\leq I_n^\ve} |m_j\ve|^2 \sim \ve^{2r}$.
This will  assure that the non-periodic plywood structure can by approximated by locally-periodic,  comprising  $Y_{x_n^\ve}$-periodic structure  in each $\Omega_n^\ve$ of side $\ve^r$ with an appropriately  chosen  $r\in (0,1)$.  Here   $Y_x= D(x)Y$  with   $D(x)=R^{-1}(\gamma(x_3))W(x)$ and 
\[
W(x) =  \left(\hspace{-0.1 cm}
\begin{array}{ccc}
1 & 0 & 0\\
 0&1 & w(x)\\
 0&0&1
\end{array}\hspace{-0.15 cm}\right), \quad \text{ where } \, \, w(x)=  \gamma^\prime(x_3)(\cos (\gamma(x_3))x_1+\sin(\gamma(x_3)) x_2).
\]
The definition of   $R$, $W$ and $\gamma$  ensures   assumptions on  $D$  stated in Section~\ref{TwoScaleConv}. 
Since $\vartheta$ is independent of the first variable,   we consider  in $W(x)$ the  shift only for the second variable.
We denote $Z_x=W_xY$ and  consider a $Z_x$-periodic function 
\[
\hat \vartheta(x,y)= \sum_{k\in \mathbb Z^3} \vartheta(y- W_x k),
\]
Notice that  $\{y \in \mathbb R^3 : \hat \vartheta(x,y)=1\}$ is  a $Z_x$-periodic set of cylinders of radius $a$. 

Then for  $x_{n}^\ve \in \Omega_n^\ve$  as above and $\tilde x_n^\ve =x_{n}^\ve$ we define a   tensor 
\begin{eqnarray*}
  E^\ve_{n}(x) &=& (\mathcal L^\ve_0 B)(x)\, \chi_\Omega(x),  
\end{eqnarray*}
where  $B(x,y)=  E_1\hat \vartheta(x,R_{x_3}y) +E_2(1- \hat \vartheta(x,R_{x_3}y))$  for $x\in \Omega$ and  $y\in Y_x$.
 Notice that   $B\in L^\infty(\cup_{x\in \Omega}  \{x\}\times Y_x)$ and $B \in C(\overline\Omega, L^p_{\text{ per}}(Y_x))$ for $1\leq p < \infty$. 

Now we  rewrite the equation \eqref{main1} in the form 
\begin{eqnarray}\label{Aprox_Eq}
\int_\Omega E^\ve_{n}(x) e(u^\ve) e (\phi) dx + \int_\Omega(E^\ve(x)- E^\ve_{n}(x)) e(u^\ve) e (\phi) dx
 = \int_\Omega G(x)\,  \phi \,  dx
\end{eqnarray}
and shall show that the second integral on the left hand side converge to zero as $\ve \to 0$. Applying l-t-s convergence in the first term we shall obtain  macroscopic equations for the linear elasticity problem posed in a domain with a non-periodic plywood structure. 

In  the following calculations we  shall use the estimate, proven in \cite{Briane1}, 
\begin{lemma}[\cite{Briane1}]\label{Differ_Charact}
For the characteristic function of  a fibre system yields
\begin{equation*}
 ||\vartheta_r(x+\tau) - \vartheta_r(x)||^2_{L^2(\Omega)} \leq  C r L |\tau|,
\end{equation*}
where $L$ is the length  and $r$ is the radius of fibres.
\end{lemma}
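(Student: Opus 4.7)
The plan is to reduce the $L^2$-quantity on the left to the Lebesgue measure of a symmetric difference, and then to bound that measure geometrically by the surface area of the fibres multiplied by $|\tau|$.

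First, since $\vartheta_r$ takes only the values $0$ and $1$, the pointwise identity $|\vartheta_r(x+\tau)-\vartheta_r(x)|^2 = \chi_{F \triangle (F-\tau)}(x)$ holds, where $F:=\{\vartheta_r = 1\}$ denotes the fibre set. Hence
$$\|\vartheta_r(\cdot+\tau)-\vartheta_r\|_{L^2(\Omega)}^2 \leq |F \triangle (F-\tau)|.$$
The next elementary step is the inclusion
$$F \triangle (F-\tau) \subset \{x\in\mathbb R^d : \mathrm{dist}(x,\partial F)\leq |\tau|\}.$$
Indeed, if $x\in F$ but $x+\tau\notin F$, then the segment $[x,x+\tau]$ must cross $\partial F$, placing $x$ within distance $|\tau|$ of $\partial F$; the reverse case is symmetric.

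The second step is a tubular-neighborhood volume estimate. For $|\tau|$ smaller than the radius of curvature of $\partial F$ (which is $r$ for cylindrical fibres), the coarea formula or a direct computation yields
$$\big|\{x:\mathrm{dist}(x,\partial F)\leq|\tau|\}\big| \leq C\, \mathcal{H}^{d-1}(\partial F)\, |\tau|.$$
A cylindrical fibre of radius $r$ and length $L$ has lateral surface area $2\pi r L$ together with two disk caps of area $\pi r^2$ each; since $r\leq L$ in the relevant regime, the cap contribution is absorbed into the lateral one, so that $\mathcal{H}^{d-1}(\partial F) \leq C r L$ per fibre. Summing over the (geometrically controlled number of) fibres in the system produces the desired bound $|F \triangle (F-\tau)| \leq C r L |\tau|$.

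In the complementary regime $|\tau| \gtrsim r$ the tubular-neighborhood inclusion is still valid but no longer sharp; here one instead uses the trivial bound $|F\triangle(F-\tau)| \leq 2|F| = 2\pi r^2 L \leq CrL|\tau|$ to close the argument. The main obstacle is the transition between these two regimes, since one must ensure a single constant $C$ that serves uniformly in $|\tau|$ (and in the relative geometry of neighbouring fibres in the system). Once both regimes are estimated separately the bounds combine cleanly, and all remaining ingredients are elementary differential geometry of cylinders.
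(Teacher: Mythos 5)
The paper cites this lemma from \cite{Briane1} without reproducing a proof, so there is no internal argument to compare yours against; I assess the proposal on its own merits. Your sketch is essentially correct: for a $\{0,1\}$-valued function the squared $L^2$ difference equals $|F\triangle(F-\tau)|$, that set lies in a $|\tau|$-neighbourhood of $\partial F$, and a tube around the lateral surface of a cylinder of radius $r$ and length $L$ has volume of order $rL|\tau|$. However, the two issues you flag as the ``main obstacle''---matching constants across the regimes $|\tau|\lessgtr r$, and the breakdown of the curvature/reach bound at the cylinder caps and edges (which you do not actually address)---can be dispensed with entirely. The standard $L^1$ translation estimate for finite-perimeter sets states that for any $\tau\in\mathbb R^d$ and any set $F$ of finite perimeter,
\begin{equation*}
\|\chi_F(\cdot+\tau)-\chi_F\|_{L^1(\mathbb R^d)}\le |\tau|\,\mathrm{Per}(F),
\end{equation*}
proved in one step by mollifying, writing $f(x+\tau)-f(x)=\int_0^1\nabla f(x+t\tau)\cdot\tau\,dt$, integrating in $x$, and passing to the limit. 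Since the difference of two indicators takes values in $\{-1,0,1\}$, its square equals its absolute value, hence
\begin{equation*}
\|\vartheta_r(\cdot+\tau)-\vartheta_r\|^2_{L^2(\Omega)}=\|\chi_F(\cdot+\tau)-\chi_F\|_{L^1(\Omega)}\le |\tau|\,\mathrm{Per}(F)\le C\,rL\,|\tau|,
\end{equation*}
using $\mathrm{Per}(F)=2\pi rL+2\pi r^2\le 4\pi rL$ for a single fibre with $r\le L$, and additivity of perimeter over the fibres. This gives the bound with an explicit constant, uniformly in $|\tau|$, with no curvature or reach hypothesis and hence no case-splitting; your tubular-neighbourhood computation reconstructs a weaker version of the same BV inequality by hand and then has to patch the regimes together.
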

 
 Since in each $\Omega_n^\ve$ the length of  fibres is of order $\ve^r$, applying  Lemma~\ref{Differ_Charact}, equality \eqref{Taylor_Approx}, and  the estimates $N_\ve\leq C \ve^{-3r}$ and $I_n^\ve\leq C \ve^{3(r-1)}$  we conclude that
 \begin{equation}\label{estim_non_local}
\sum\limits_{n=1}^{N_\ve} \int\limits_{\Omega_n^\ve}\sum\limits_{j=1}^{I_n^\ve} \left| \vartheta_\ve\big(R_{k_j^{n}}(x-x_{k_j^n}^\ve) \big)- 
\vartheta_\ve\big(R_{\kappa_n} (x-x_n^\ve) - W_{x_{n}^\ve}m_j\ve \big)\right|^2 dx\leq  C\ve^{3r-2}.  
\end{equation}
Considering the definition of  $E^\ve$ and $E^\ve_n$,   estimate \eqref{estim_non_local}  and the fact that $$
\chi_{\Omega_f^\ve}(x)=\chi_\Omega(x) \sum_{n=1}^{N_\ve} \sum_{j=1}^{I_n^\ve} \vartheta_\ve\big(R_{k_j^{n}}(x-x_{k_j^n}^\ve)\big)
$$
for $N_\ve$, $I_n^\ve$ and $k_j^n$ as defined above,  we obtain  
\begin{eqnarray}\label{estimnon-perLoc}
\int_{\Omega} |(E^\ve(x) - E^\ve_n(x)) e(u^\ve) e (\phi)| dx\leq C \ve^{3r-2} \|u^\ve\|_{H^1(\Omega)} \|\phi \|_{W^{1,\infty}(\Omega)}
 \end{eqnarray}
and  for  $2/3 <r<1$  and $\{u^\ve\}$ bounded in $H^1$ converges to zero as $\ve \to 0$. Thus  in the definition of a locally-periodic approximation  we shall consider a covering of $\Omega$ by cubes of side $\ve^r$ with $2/3 <r<1$. 

Now we take $ \psi(x)= \psi_1(x) + \ve(\mathcal L^\ve_\rho\psi_2)(x) $
as a  test function in \eqref{Aprox_Eq}, where  $\psi_1\in C^\infty_0(\Omega)$ and  $\psi_2\in W^{1,\infty}_0(\Omega; C^\infty_{\text{per}}(Y_x))$.
Applying to  the first integral in \eqref{Aprox_Eq}  similar calculations as for the locally-periodic problem \eqref{main2},  using   \eqref{estimnon-perLoc}  and the locally-periodic two-scale convergence of a subsequence of $\{\nabla u^\ve\}$  we obtain 
\begin{eqnarray*}
 &&\int_\Omega \ddashinttt_{Y_x} B(x, y) \big[e(u) + e_y(u_1(x,y))\big] \big[e(\psi_1)+ 
e_y\big(\psi_2(x, y)\big) \big] dy dx=\int_\Omega  G(x)\, \psi_1\, dx,
\end{eqnarray*}
where $Y_x=D_xY$.  The  transformation $\mathcal F: Y_x \to Z_x$, i.e. $\tilde y= \mathcal F(y)=R_{x_3}y$, gives
\begin{eqnarray*}
&&\int_\Omega \,  \ddashinttt_{Z_x}   \tilde B(x,\tilde y) \big( e(u) + e^R_{\tilde y}(\tilde u_1)
\big)\big(e(\psi_1)+ e^R_{\tilde y}(\tilde \psi_2)  \big)d\tilde y dx =\int_\Omega  G(x)\, \psi_1\, dx,
\end{eqnarray*}
with  
$\tilde B(x,\tilde y)= E_1\hat \vartheta(x,\tilde y) +E_2(1- \hat \vartheta(x, \tilde y))$, and $e^R_{\tilde y}$ as in \eqref{def_trans_grad}.

We notice that $\hat\vartheta$  is independent of $\tilde y_1$  and, similarly as in the locally-periodic situation,  
we can conclude   that the correspondent unit cell problems are two-dimensional.
\begin{theorem}\label{Theorem_NonPer_macro}
 The sequence of  solutions $\{u^\ve\}$ of microscopic model  \eqref{main} with the non-periodic elasticity tensor $E^\ve$, determined by the characteristic function \eqref{elastnonper}, converges to a  solution of the macroscopic problem 
\begin{eqnarray*}
- \text{div } (B^{\text{hom}}(x) e(u)) &=& G \quad \text{ in } \Omega, \\
u &=& g \quad \text{ on } \partial\Omega,
\end{eqnarray*}
where the  homogenized elasticity tensor is given by 
\begin{eqnarray*}
B^{\text{hom}}_{ijkl}(x)
&=&\ddashinttt_{\hat Z_x} \left( \tilde B_{ijkl}(x,\hat y) +\tilde  B(x,\hat y) \hat e^R_{\hat y}(\tilde \omega_{ij})_{kl} \right) d\hat y 
\end{eqnarray*}
and $\tilde \omega_{ij}$ are solutions of the cell problems 
\begin{eqnarray}\label{UnitCellNonPer}
 -\nabla_{\hat y}\cdot \Big(
  \hat R_{x_3} \tilde B(x,\hat y) \hat e^R_{\hat y} (\tilde \omega_{ij}) \Big) 
 =  \nabla_{\hat y}\cdot \Big(\hat R_{x_3}  \tilde B(x,\hat y) l_{ij} \Big) && \quad \text{ in } \hat Z_x,\nonumber\\
\tilde{\omega}_{ij} \hspace{1.0 cm} \text{ periodic } && \quad  \text{ in } \hat Z_x. \nonumber
\end{eqnarray}
Here $\hat y=(\tilde y_2, \tilde y_3)$,  $l_{ij}= \frac 12 (l_i \otimes l_j + l_j \otimes l_i)$,  where $(l_i)_{1\leq i\leq 3}$ is the canonical basis of $\mathbb R^{3}$, the matrix $\hat R_{x_3}$ and  $\hat e^R_{\hat y}$ are given by \eqref{transform_reduce}, and  $\hat Z_x= Z_x \cap \{\tilde y_1=0\}$.
\end{theorem}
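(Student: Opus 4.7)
The plan is to reduce the non-periodic problem to a locally-periodic one by the approximation procedure outlined just before the statement, and then to re-run the argument used for Theorem~\ref{Theorem_macro_loc_per}.

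First I would use Lemma~\ref{apriori} to obtain $\|u^\ve\|_{H^1(\Omega)}\leq C$ uniformly in $\ve$, and then invoke Theorem~\ref{compactness_theorem} to extract a subsequence with $u^\ve\rightharpoonup u$ in $H^1(\Omega)$, $u^\ve\to u$ locally-periodic two-scale, and $\nabla u^\ve\to \nabla u+\nabla_y u_1$ locally-periodic two-scale for some $u_1\in L^2(\Omega;H^1_{\text{per}}(Y_x)/\mathbb{R})$, where the family $Y_x=D_xY$ with $D_x=R^{-1}(\gamma(x_3))W(x)$ satisfies the regularity required in Section~\ref{TwoScaleConv}. Next I would choose a test function of the form $\psi=\psi_1+\ve\,\mathcal L^\ve_\rho\psi_2$ with $\psi_1\in C_0^\infty(\Omega)$ and $\psi_2\in W^{1,\infty}_0(\Omega;C^\infty_{\text{per}}(Y_x))$, and plug it into the split identity \eqref{Aprox_Eq}.

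The second integral in \eqref{Aprox_Eq} is the crucial error term. Here I would apply the decomposition \eqref{Taylor_Approx} of $R_{k_j^n}(x-x_{k_j^n}^\ve)$ around $x_n^\ve$, whose remainder is of size $\ve^{2r}$ uniformly on each cube $\Omega_n^\ve$; combined with Lemma~\ref{Differ_Charact} and the elementary counts $N_\ve\leq C\ve^{-3r}$, $I_n^\ve\leq C\ve^{3(r-1)}$, this yields the bound \eqref{estim_non_local} and hence the $L^2$-type estimate \eqref{estimnon-perLoc}. Choosing $r\in(2/3,1)$ makes the resulting exponent $\ve^{3r-2}$ vanish, so the error term disappears in the limit against any $H^1$ test function, which is exactly what is needed since $\{u^\ve\}$ is $H^1$-bounded and $\psi$ is $W^{1,\infty}$.

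For the first integral in \eqref{Aprox_Eq}, I would repeat verbatim the passage to the l-t-s limit already carried out in Section~\ref{Application} for the locally-periodic tensor, with $A$ replaced by $B(x,y)=E_1\hat\vartheta(x,R_{x_3}y)+E_2(1-\hat\vartheta(x,R_{x_3}y))$: one splits $e(\mathcal L^\ve_\rho\psi_2)$ into the $\chi_{\Omega_n^\ve}$-piece plus terms involving $\phi_{\Omega_n^\ve}-\chi_{\Omega_n^\ve}$ and $\nabla\phi_{\Omega_n^\ve}$, uses the strong l-t-s convergence of $\mathcal L^\ve_0 B\,[e(\psi_1)+\ve\sum_n e(\psi_2^n)\chi_{\Omega_n^\ve}]$ (via Lemma~\ref{StrongTwo_ScaleConvergence} and Lemma~\ref{converg_ost}), together with the gradient bound $\|\nabla\phi_{\Omega_n^\ve}\|_\infty\leq C\ve^{-\rho}$ with $r<\rho<1$, to drive those ancillary terms to zero. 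After changing variables $\tilde y=R_{x_3}y$ that sends $Y_x\to Z_x$, density extends the identity to $\psi_1\in H^1_0(\Omega)$ and $\tilde\psi_2\in L^2(\Omega;H^1_{\text{per}}(Z_x)/\mathbb{R})$.

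Finally, by linearity and the fact that $\hat\vartheta(x,\cdot)$ is independent of $\tilde y_1$, the corrector has the form $\tilde u_1(x,\tilde y)=\tfrac12\sum_{i,j}(\partial_iu^j+\partial_ju^i)\tilde\omega_{ij}(x,\hat y)$ with $\tilde\omega_{ij}$ solving the two-dimensional cell problem on $\hat Z_x$ stated in the theorem; substituting back gives the effective tensor $B^{\text{hom}}(x)$ in the claimed form. Symmetry and coercivity of $B^{\text{hom}}$ follow as in the locally-periodic case from the symmetry and positive-definiteness of $E_1,E_2$ and invertibility of $R_{x_3}$, so the macroscopic problem admits a unique weak solution, forcing the entire sequence to converge. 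The main obstacle is to justify cleanly that the error $(E^\ve-E^\ve_n)e(u^\ve)e(\psi)$ vanishes for $r\in(2/3,1)$, which comes down to combining the Taylor expansion \eqref{Taylor_Approx} with Lemma~\ref{Differ_Charact} and summing over the $\ve^{-3r}$ cubes and $\ve^{3(r-1)}$ parallelepipeds per cube; the remaining homogenization algebra is parallel to the locally-periodic argument.
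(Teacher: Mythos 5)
Your proposal follows the paper's own proof essentially step for step: a priori bound via Lemma~\ref{apriori}, compactness via Theorem~\ref{compactness_theorem}, the test function $\psi_1+\ve\,\mathcal L^\ve_\rho\psi_2$, the split identity \eqref{Aprox_Eq} with the error killed by the Taylor expansion \eqref{Taylor_Approx}, Lemma~\ref{Differ_Charact}, and the cube/parallelepiped counts giving $\ve^{3r-2}$ for $r\in(2/3,1)$, followed by the locally-periodic limit passage with $B$ in place of $A$, the change of variables to $Z_x$, and the reduction to the two-dimensional cell problem. This is correct and matches the paper's argument in approach and in all the key estimates.
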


The same arguments as for locally-periodic problem imply the existence of a unique solution of the macroscopic problem and the convergence of the entire sequence of solutions of the microscopic models. 

We notice that for non-periodic plywood-structure, where $r=1$,  the  space-dependent periodicity and the unit-cell problem  \eqref{UnitCellNonPer}   differ from those obtained for the locally-periodic microstructure with $0<r<1$, compare to \eqref{unit_loc_per2}. We  also emphasise  that in the case of locally-periodic microstructure  the form of the  macroscopic model is the same for every $r \in(0,1)$, see Theorem~\ref{Theorem_macro_loc_per}.

\section{Conclusion}
In this paper we investigate the  concept of the locally-periodic two-scale convergence.  Similar to the periodic case,  we use  the idea of oscillating test functions, which are synchronous with oscillations in either the microstructures or in coefficients of microscopic problems. However, we extend the theory to the non-periodic case, in particular we focus on locally-periodic structures.  We derived the macroscopic equations for a linear elasticity problem, posed in a domain with a ``plywood structure", a prototypical pattern in many biomaterials such as bones or exoskeletons. The  non-periodic microstructure can be  approximated by a locally-periodic one, provided the transformation matrix is twice continuous differentiable. The  techniques developed  here  are not restricted to the equations of linear elasticity and can be applied to a wide range of stationary or  time-dependent problems. For example,  a heat conduction problem was considered in \cite{Briane3} and a macroscopic equation was derived using the $H-$convergence method.
 Our  results would lead to the same macroscopic equation and it appears that  the derivation would follow in a much more direct manner. Moreover,  our approach allows multiscale analysis in domains with more general microscopic geometries than those considered in \cite{Alexandre, Mascarenhas}.
In the context of the definition of  a microstructure given by the transformation of centres of spherical balls,  see Fig~\ref{fig51}, considered in \cite{Alexandre, Briane3}, we have the relation  $\theta^{-1}(x)= D^{-1}(x)x$, where  $\theta$ is the $C^2$-diffeomorphism defining the transformation of centres of  balls.

\begin{figure}
\begin{center}
\includegraphics[width=0.43\textwidth]{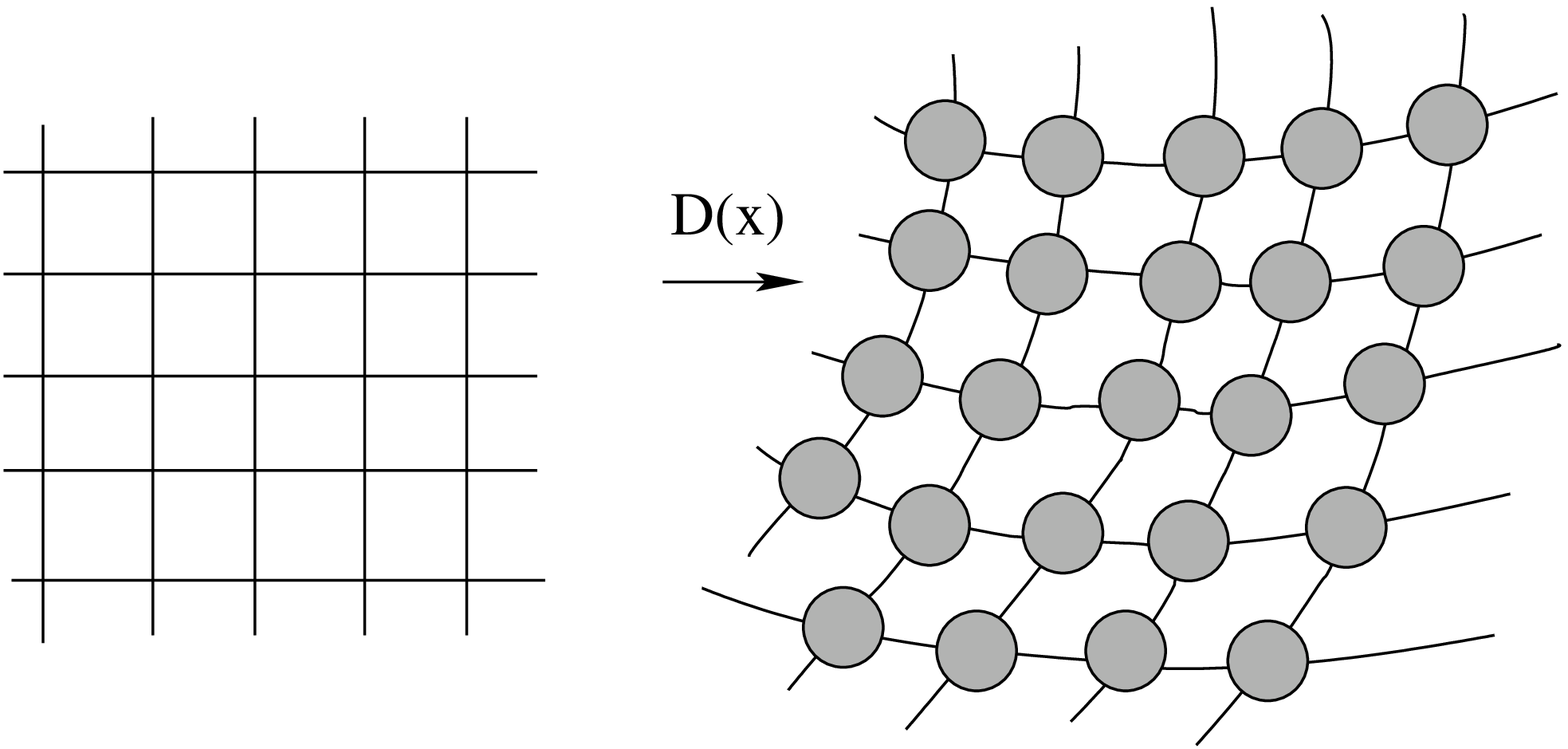}\qquad 
\includegraphics[width=0.21\textwidth]{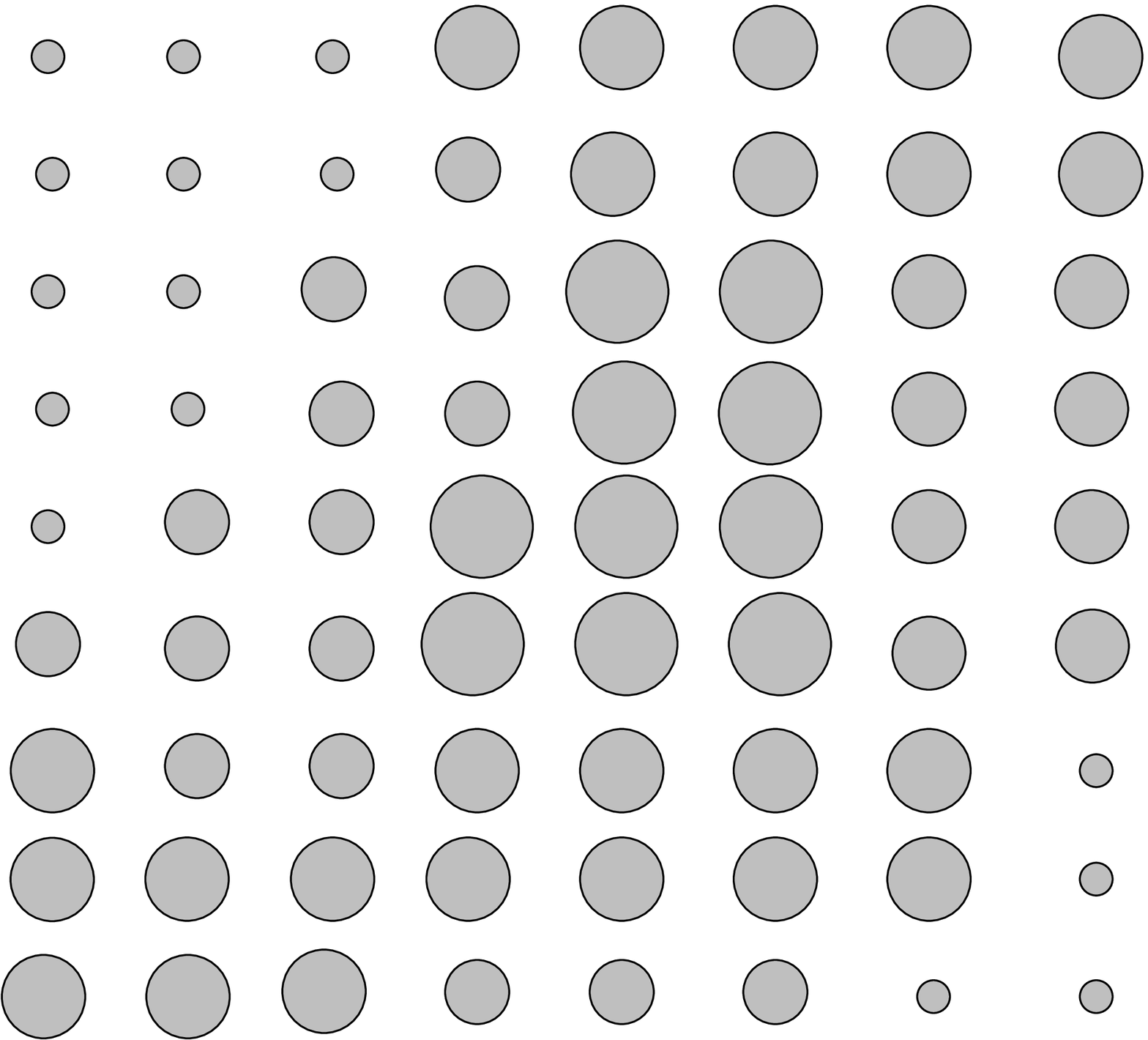}
\caption{\small{Transformation of centres of spherical balls. Space-dependent perforations.}} \label{fig51}
\end{center}
\end{figure}
Another example of a locally-periodic microstructure  is the space-dependent perforation in concrete materials, see Fig~\ref{fig51}, where  a heterogeneity of the medium  is given by  areas of high and low diffusivity, \cite{AdrianTycho2}.  For  $\rho\in C^1(\mathbb R^3)$, such that $0<\rho_1\leq \rho(x) \leq \rho_2 <1$ for $x\in \mathbb R^3$, we consider    $Y$-periodic function
\[
 \chi(x,y) = \begin{cases}
               1 \quad  & \text{ for } \quad |y|\leq \rho(x), \\
0\quad &\text{ for } \quad  |y|> \rho(x).
              \end{cases}
\]
  Then  the characteristic function of a domain with low diffusion is given by $\chi_{\Omega^\ve_l}(x)=(\mathcal L^\ve_0\chi)(x)$.
In the notation of \cite{AdrianTycho2},  the corresponding level set function reads $S(x,y)=|y|^2-\rho^2(x)$.
Showing that the locally-periodic problem provides a correct approximation for the non-periodic model and applying 
the locally-periodic two-scale convergence  with $D(x)=I$, we should obtain the same macroscopic equations, as derived in \cite{AdrianTycho2} using formal asymptotic expansion.  The main step of the  approximation  involves the following calculations.
For $\ve\kappa_n\in \Omega_n^\ve$ and $k_j^n= \kappa_n + m_j$, with $j=1, \ldots I_n^\ve$ and $m_j \in \mathbb Z^3$,  considering Taylor expansion for $\rho(x)$ around $\ve\kappa_n$, we have 
 \begin{eqnarray*}
&& \sum\limits_{n=1}^{N_\ve}\int\limits_{\Omega_n^\ve} \sum\limits_{j=1}^{I_n^\ve}
\Big| \chi\big(\ve\kappa_n, \frac x \ve\big) - \chi\big(\ve k_j^n, \frac x \ve\big) \Big|^2 dx  
\leq   \sum\limits_{n=1}^{N_\ve}I_n^\ve
 \sup\limits_{1\leq j \leq I_n^\ve}\Big\| \chi\big(\ve\kappa_n, \frac x \ve\big) - \chi\big(\ve k_j^n, \frac x \ve\big) \Big\|^2_{L^2}\leq \\
&& c_1 \sum\limits_{n=1}^{N_\ve} I_n^\ve \sup\limits_{1\leq j \leq I_n^\ve}\Big| \big|\ve \rho(\ve \kappa_n)\big|^3 - \big|\ve \rho(\ve \kappa_n) \pm \ve 
 \|\nabla \rho \|_{L^\infty}|\ve m_j|\big|^3 \Big| \leq c_2  \sup\limits_{1\leq j \leq I_n^\ve}|\ve m_j| \leq c \ve^r.
\end{eqnarray*}

\section*{Acknowledgments}
The author  would like to thank Christof Melcher, Yuriy Golovaty and  Fordyce Davidson  for fruitful discussions. 
Special thanks go  to the anonymous reviewers for their suggestions and comments which have significantly improved the presentation of this paper.

\end{document}